\documentclass[10pt]{amsart}

\usepackage{amssymb,latexsym, mathtools,tikz}
\usepackage{enumerate}
\usepackage{graphicx}
\usepackage{float}
\usepackage{placeins}
\usepackage{mdframed}
\usepackage{amssymb}
\usepackage{esint}
\usepackage{cool}
\usepackage[all,cmtip]{xy}
\usepackage{mathtools}
\usepackage{amstext} % for \text macro
\usepackage{array}   % for \newcolumntype macro
\usepackage[shortlabels]{enumitem}
\usepackage{ytableau}
\usepackage{tikz}
\usetikzlibrary{arrows,decorations.markings, cd}
\tikzstyle arrowstyle=[scale=1]
\tikzstyle directed=[postaction={decorate,
decoration={markings,mark=at position .65 with {\arrow[arrowstyle]{stealth}}}}]
\usepackage{mathdots}
\usepackage{quiver}

\newcolumntype{L}{>{$}l<{$}} % math-mode version of "l" column type
\newcolumntype{C}{>{$}c<{$}}
\newtheorem{theorem}{Theorem}[section]
\newtheorem{lemma}[theorem]{Lemma}
\newtheorem{cor}[theorem]{Corollary}
\newtheorem{prop}[theorem]{Proposition}
\newtheorem{setup}[theorem]{Setup}
\theoremstyle{definition}
\newtheorem{definition}[theorem]{Definition}
\newtheorem{example}[theorem]{Example}

\newtheorem{obs}[theorem]{Observation}
\newtheorem{notation}[theorem]{Notation}

\newtheorem{chunk}[theorem]{}
\theoremstyle{remark}
\newtheorem{remark}[theorem]{Remark}

\newtheorem{the context}[theorem]{The Context}

\numberwithin{equation}{theorem}
\numberwithin{equation}{section}

% COMENTS

%\newcommand{\nt}[2][$^\spadesuit$]{\hspace{0pt}#1\marginpar{\tt\raggedleft
%    #1 #2}}
%\newcommand{\dw}[2][$^\spadesuit$]{\nt[#1]{DW:#2}}
%\newcommand{\ssw}[2][$^\spadesuit$]{\nt[#1]{SSW:#2}}
%\newcommand{\ts}[2][$^\spadesuit$]{\nt[#1]{TS:#2}}

% CATEGORIES

\newcommand{\cat}[1]{\mathcal{#1}}

% DIMENSIONS

% OTHER INVARIANTS

\newcommand{\Span}{\operatorname{Span}}

% FUNCTORS

\newcommand{\coker}{\operatorname{Coker}}

\newcommand{\im}{\operatorname{Im}}

\newcommand{\Ker}{\operatorname{Ker}}

%\newcommand{\yext}{\ext_{\caty}}

% IDEALS

%\newcommand{\p}{\ideal{p}}

% OPERATIONS AND ACCENTS

% OPERATORS

%\newcommand{\Min}{\operatorname{Min}}

% MATHBB

\newcommand{\bbz}{\mathbb{Z}}
\newcommand{\bbn}{\mathbb{N}}

% ARROWS

% MAPS

% MISCELLANEOUS

%\newcommand{\Mod}{\operatorname{Mod}}

% RENEWED COMMANDS

\renewcommand{\geq}{\geqslant}
\renewcommand{\leq}{\leqslant}
\renewcommand{\ker}{\Ker}

% NEW ADDITIONS

\newcommand{\ch}{\textrm{CH}}

\newcommand{\gl}{\operatorname{{GL}}}

\newcommand{\maps}[5]{\xymatrix{#1 \ar[r]^-{#3} & #2 \\
#4 \ar@{|->}[r] & #5 \\}}

\def\w{\wedge}

\def\im{\operatorname{im}}

\setcounter{MaxMatrixCols}{20}

\newcommand{\F}{\cat{F}}
\renewcommand{\ch}{\operatorname{ch}}

\newcommand{\sgn}{\operatorname{sgn}}

\newcommand{\mdeg}{\operatorname{mdeg}}

\begin{document}
\title[GL-Equivariant Complex]{A GL-Equivariant Complex Inducing Character Identities for Schur Modules}

%    Information for first author
\author{Keller VandeBogert}
%    Address of record for the research reported here
\address{University of Notre Dame}
\email{kvandebo@nd.edu}
\date{\today}

\maketitle

\begin{abstract}
    In this paper we construct a GL-equivariant complex of Schur modules over a ring of positive characteristic that can be used to deduce classical alternating sum identities for Schur polynomials. This complex globalizes to a complex of vector bundles and can also be used to give an explicit construction of an exact sequence predicted by work of Grayson involving Adams operations identities on the algebraic K-theory of a given scheme $X$. The more general complex gives an explicit construction that reproves the aforementioned Adams operations identities in full generality. 
\end{abstract}

\section{Introduction}

Schur polynomials are special classes of polynomials parametrized by partitions of an integer $n$ that arise in the context of combinatorics and representation theory. There is a large body of research devoted to proving identities relating Schur polynomials to other combinatorial classes of polynomials, with more well-known examples given by the Jacobi-Trudi, Giambelli, and Cauchy formulas. The fact that any linear combination of symmetric polynomials can be rewritten in terms of Schur polynomials is well-known, since the Schur polynomials form a basis for the vector space generated by all symmetric polynomials (see, for instance, \cite[Proposition 2.2.10]{grinberg2020hopf}). That being said, it is a generally nontrivial problem to explicitly determine the coefficients appearing in such identities.

Let $p_n$ denote the $n$th-power symmetric polynomial (that is, $p_n = \sum_i x_i^n$); since $p_n$ is symmetric, it may be written as a linear combination of Schur polynomials. More explicitly, there is an equality
\begin{equation}\label{eqn:liuPoloId}
    p_n = \sum_{i=0}^{n-1} (-1)^i s_{(n-i,1^i)},
\end{equation}
where $s_{(n-i,1^i)}$ denotes the Schur polynomial corresponding to the partition $(n-i,1^i)$. This identity may be proved using a variety of techniques, almost all of which are completely combinatorial in nature (see \cite[Proposition 1.1]{grinberg2020petrie} for a discussion of such methods). Given that the identity \ref{eqn:liuPoloId} is an alternating sum of character polynomials for Schur modules, one is tempted to ask if there is a complex of vector spaces inducing the identity \ref{eqn:liuPoloId}. 

In this paper, we answer this question in the affirmative. More precisely, we construct a $\gl (V)$-equivariant complex of vector spaces
\begin{equation}\label{eqn:theEquivariantComplex}
    0 \to S_n (V) \to S_{(n-1,1)} (V) \to \cdots \to S_{(n-i,1^i)} (V) \to \cdots \to \bigwedge^n V \to 0
\end{equation}
over a field of positive characteristic that can be used to deduce the equality \ref{eqn:liuPoloId}. Notice that the existence of such a complex is not immediately obvious: in characteristic $0$, the irreducibility of these Schur modules implies that there are \emph{no} nontrivial equivariant maps $S_{(n-i,1^i)} (V) \to S_{(n-i-1,1^{i+1})} (V)$ for any $0 \leq i \leq n-1$, so the characteristic assumption is necessary. 

This complex also globalizes to the level of vector bundles, whence a special case of \ref{eqn:theEquivariantComplex} shows that for any vector bundle $\F$ over a field of characteristic $p >0$, there is an induced exact sequence of vector bundles
\begin{equation}\label{eqn:FpBundleComplex}
    0 \to F^* \F \to S_p (\F) \to \cdots \to S_{(p-i,1^i)} (\F) \to \cdots \to \bigwedge^p \F \to 0,
\end{equation}
where $F$ denotes the absolute Frobenius morphism. The existence of such an exact sequence was predicted by work of Grayson \cite{grayson1992adams} involving Adams operations identities on the algebraic K-theory of vector bundles on a scheme $X$; a dualized version of this complex was implicit in work of Carter and Lusztig \cite[Discussion 4.3]{carter1974modular}, but the version presented in this paper has the advantage of being generalized to composite integers and with explicit and simple formulas for the differentials. We also use a more general version of \ref{eqn:FpBundleComplex} to give a novel proof of the previously mentioned Adams operations identities established by Grayson.

The paper is organized as follows. In Section \ref{sec:backgroundAndNotation}, we introduce background and notation related to Schur modules, multidegrees, and character polynomials. We particularly emphasize the case of Schur modules associated to hook partitions, since these are the modules we will be most interested in later sections.

In Section \ref{sec:complexAndHomology}, we construct the complex of vector spaces \ref{eqn:equivComplex} and compute its cohomology explicitly. The definition of the differentials is relatively straightforward, being a composition of naturally defined comultiplication and multiplication maps; the main content is that these maps descend to well-defined maps on the appropriate Schur modules. We then prove that the cohomology of this complex is isomorphic to Frobenius powers of other Schur modules, and use this fact to give an inductive proof of the identity \ref{eqn:liuPoloId} by performing a multigraded rank count on the appropriate complex.

Finally, in Section \ref{sec:Ktheory} we consider some applications to the algebraic K-theory of vector bundles. We first recall some elementary facts of K-theory and Adams operations, including an explicit form for Adams operations proved by Grayson \cite{grayson1992adams}. As previously mentioned, the complex \ref{eqn:FpBundleComplex} gives an explicit construction of a complex predicted by work of Grayson and may also be used to reprove Grayson's Adams operation identities on the algebraic K-theory of vector bundles. 

\section{Background and Notation}\label{sec:backgroundAndNotation}

In this section, we introduce some of the necessary background that will be needed for later sections. Most of the material here will only be needed for Schur modules associated to hook partitions, but we still state many definitions in more generality. We also talk about formal Frobenius powers and character polynomials for torus-invariant modules (equivalently, this is just taking the sum of the multidegrees appearing in any multigraded basis). 

To begin, we recall the definition of a partition along with some associated notation.

\begin{definition}
A \emph{partition} $\lambda = (\lambda_1 \geq \cdots \lambda_n)$ is a finite sequence of nonincreasing integers. The partition $\lambda$ is a partition of an integer $N$ if $|\lambda| := \lambda_1 + \cdots + \lambda_n = N$. The notation $(1^i)$ will denote the partition 
$$\underbrace{(1 , \dots , 1)}_{i \ \textrm{times}}.$$
Any partition of the form $(a,1^b)$ for some integers $a$ and $b$ is called a \emph{hook} partition. 
\end{definition}

\begin{remark}
The ``hook" terminology comes from the standard representation of partitions as Young tableaux, since the tableau corresponding to a hook partition resembles a hook.
\end{remark}

\begin{notation}
Given any partition $\lambda$, the notation $S_\lambda (V)$ denotes the Schur module corresponding to $\lambda$. 
\end{notation}

Our conventions for Schur modules are chosen such that $S_{(d)} (V) = S_d (V)$ (the symmetric algebra) and $S_{(1^d)} (V) = \bigwedge^d V$ (the exterior algebra). For an introduction to Schur modules from a commutative algebraic perspective, see \cite{akin1982schur} or Chapter $2$ of \cite{weyman2003}. Throughout this paper, we will assume familiarity with the standard bialgebra structure on the symmetric and exterior algebras; for a concrete and explicit description of the corresponding operations, see Chapter I, Section $1$ of \cite{akin1982schur}.

\begin{notation}\label{not:beginningNot}
Let $V$ be a vector space over a field of positive characteristic $p$; assume $\dim V = n$ and $v_1 , \dots , v_n$ is a basis for $V$. Given an indexing set $I = (\ell_1 < \cdots < \ell_i)$ and an exponent vector $\alpha = (\alpha_1 , \dots , \alpha_n)$ with $|\alpha| = j$, define
$$v_I := v_{\ell_1} \w \cdots \w v_{\ell_i} \in \bigwedge^i V, \quad v^\alpha := v_1^{\alpha_1} \cdots v_n^{\alpha_n} \in S_j (V).$$
The notation $\epsilon_i$ denotes the vector with a $1$ in the $i$th spot and $0$s elsewhere. 
\end{notation}

\begin{chunk}\label{chunk:SchurDiscussion}
For the convenience of the reader, we give a more explicit description of Schur modules associated to hook partitions; that is, Schur modules of the form $S_{(a,1^b)} (V)$ for some vector space $V$. By taking homogeneous strands of the Koszul complex, there is an induced \emph{tautological Koszul complex}
$$ \cdots \to  \bigwedge^b V \otimes S_a (V) \xrightarrow{\kappa_{a,b}}  \bigwedge^{b-1} V \otimes S_{a+1} (V)  \to \cdots$$
whose maps can be described explicitly as the composition
\begingroup\allowdisplaybreaks
\begin{align*}
    \bigwedge^b V \otimes S_a (V) &\xrightarrow{\Delta \otimes 1} \bigwedge^{b-1} V \otimes V \otimes S_{a} (V) \\
    &\xrightarrow{1 \otimes m} \bigwedge^{b-1} V \otimes S_{a+1} (V).
\end{align*}
\endgroup
where $\Delta$ and $m$ denote the appropriate comultiplication and multiplication, respectively. Then, the Schur module $S_{(a,1^b)}$ is equal to
$$\ker \kappa_{a,b} = \im \kappa_{a-1,b+1} = \coker \kappa_{a-2,b+2}.$$
\end{chunk}

\begin{remark}\label{rk:abuseNotation}
By an abuse of notation, elements of the Schur module $S_{(i,1^j)} V$ will often be denoted as simply $v_I \otimes v^\alpha$, where $|I| = i+1$ and $|\alpha| = j-1$. This should cause no confusion since $S_{(i,1^j)} V$ is equivalently described as a quotient of $\bigwedge^{i+1} V \otimes S_{j-1} V$ by the discussion given in \ref{chunk:SchurDiscussion}.
\end{remark}

\begin{definition}
Let $V$ be any vector space and $m \in \bbn$ some integer. Then the \emph{$m$th formal Frobenius power} $F^m : S_i (V) \to S_{im} (V)$ is defined to be the $k$-linear map induced by sending $v_i \mapsto v_i^m$. 
\end{definition}

\begin{remark}
Since there is no mention of the characteristic of the field $k$ nor the primality of the integer $m$, the map $F^m$ is only defined on the standard basis monomials of $S_\bullet (V)$ and extended by linearity. 
\end{remark}

\begin{example}
If $m=4$, $i=2$, and $V$ is a two dimensional space, then
$$F^4 S_2 (V) = \Span_k \{ v_1^8 , v_1^4v_2^4, v_2^8 \}.$$
\end{example}

\begin{definition}\label{def:mgr}
Adopt notation as in Notation \ref{not:beginningNot}. Then the \emph{multidegree} of a basis element $v_I \otimes v^\alpha \in \bigwedge^a V \otimes S_b (V)$, denoted $\mdeg$, is defined as
$$\mdeg (v_I \otimes v^\alpha ) := v_I \cdot v^\alpha \in \bbz [v_1 , \dots , v_n],$$
where the notation $v_I \in \bbz [v_1 , \dots , v_n]$ denotes the element $v_{j_1} \cdots v_{j_i}$. Observe that this multigrading descends to a well-defined multigrading on the Schur modules $S_\lambda (V)$, since the straightening relations preserve the multigrading.

Any arbitrary element is \emph{multigraded} if it is a linear combination of basis elements with the same multidegree.
\end{definition}

The multigrading of Definition \ref{def:mgr} should cause no confusion, since it is induced by the natural choice of multigrading on the symmetric and exterior algebras $S_\bullet V$ and $\bigwedge^\bullet V$. Equivalently, a multigraded element is an element generating a $1$-dimensional torus-invariant subspace of the corresponding Schur module, and the multigrading is simply the multigraded character of this subspace.

\begin{definition}\label{def:character}
Let $\lambda$ be any partition and let $B \subset E \subset S_{\lambda} (V)$ denote any choice of multigraded basis for some multigraded subspace of $S_\lambda (V)$. Then the \emph{character} of the subspace $E$ is the sum
$$\ch (E) := \sum_{b \in B} \mdeg (b) \in \bbz [v_1 , \dots , v_n].$$
The character polynomial of a Schur module $S_\lambda (V)$ is often called the \emph{Schur polynomial} associated to the partition $\lambda$.
\end{definition}

\begin{remark}
For the reader familiar with representation theory, in the case that $E$ is a GL-equivariant subspace, Definition \ref{def:character} is equivalent to the more standard definition of the character as the sum
$$\sum_{\chi} \dim (S_\lambda(V)_\chi) v^\chi,$$
where the sum is taken over all characters of the standard maximal torus of $\gl (V)$, the notation $S_\lambda (V)_\chi$ denotes the weight space corresponding to the character $\chi$, and the notation $v^\chi$ denotes $v_1^{\chi_1} \cdots v_n^{\chi_n}$ (assuming $\dim V = n$). 
\end{remark}

\begin{example}
If $m$ is any integer and $V$ is an $n$-dimensional vector space on basis $v_1 , \dots , v_n$, then
$$\ch (F^m V) = v_1^m + v_2^m + \cdots + v_n^m.$$
Likewise, if $V$ is a $3$-dimensional vector space on basis $v_1 , v_2 , v_3$, then it is an easy exercise to verify
$$\ch (S_{(2,1)} (V)) = {v}_{1}^{2}{v}_{2}+{v}_{1}{v}_{2}^{2}+{v}_{1}^{2}{v}_{3}+2\,{v}_{1}{v
       }_{2}{v}_{3}+{v}_{2}^{2}{v}_{3}+{v}_{1}{v}_{3}^{2}+{v}_{2}{v}_{3}^{2},$$
       $$\ch (S_{(2,1,1)} (V)) = {v}_{1}^{2}{v}_{2}{v}_{3}+{v}_{1}{v}_{2}^{2}{v}_{3}+{v}_{1}{v}_{2}{v}_{
       3}^{2}.$$
\end{example}

We conclude this section with a proposition that will be useful in the next section. Intuitively, this proposition says that one can perform straightening relations on any multigraded element of $S_{(i,1^j)} (V)$ to ensure that a fixed $v_i$ appears with the same exponent in each element of the support (using the notational abuse mentioned in Remark \ref{rk:abuseNotation}).

\begin{prop}\label{prop:preserveExponent}
Let $f \in S_{(i,1^j)} V$ be any multigraded element. Then for any $1 \leq \ell \leq n$, the element $f$ may be represented by a linear combination of tableaux $v_{I_k} \otimes v^{\alpha^k}$ with $\alpha^k_\ell = N$, for all $k$, where $N$ is some fixed integer.
\end{prop}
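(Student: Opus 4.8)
The plan is to combine the explicit quotient presentation of $S_{(i,1^j)}(V)$ from \ref{chunk:SchurDiscussion} (with the notational convention of Remark \ref{rk:abuseNotation}) with the Koszul straightening relations, in order to push the prescribed variable $v_\ell$ out of the symmetric factor and into the exterior factor of every tableau in the support of $f$.

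First I would use that the multigrading on $\bigwedge^{i+1} V \otimes S_{j-1}(V)$ descends to $S_{(i,1^j)}(V)$ (Definition \ref{def:mgr}) to represent $f$ by a \emph{multigraded} preimage: a linear combination $f = \sum_k c_k\, v_{I_k} \otimes v^{\alpha^k}$ in which every tableau has the common multidegree $\mdeg(v_{I_k}\otimes v^{\alpha^k}) = v^\gamma$, namely the multidegree of $f$. Comparing, for the fixed index $\ell$, the exponent of $v_\ell$ in the monomial $v_{I_k}\cdot v^{\alpha^k} = v^\gamma$ shows that for every $k$,
\[
\alpha^k_\ell = \begin{cases} \gamma_\ell, & \ell \notin I_k,\\ \gamma_\ell - 1, & \ell \in I_k. \end{cases}
\]
If $\gamma_\ell = 0$ then $\ell$ belongs to no $I_k$, every $\alpha^k_\ell$ equals $0$, and $N=0$ works; so I may assume $\gamma_\ell \geq 1$ and aim to rewrite every tableau with $\ell \notin I_k$ into a combination of tableaux all containing $\ell$ in their exterior index, after which $\alpha^k_\ell = \gamma_\ell - 1 =: N$ for every $k$.

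The heart of the argument is a single rewriting move. By the discussion in \ref{chunk:SchurDiscussion}, the defining relations of $S_{(i,1^j)}(V)$ are the vanishing of the Koszul expressions $\kappa(v_L \otimes v^\beta) = \sum_{m \in L} \pm\, v_{L\setminus\{m\}} \otimes v_m v^\beta$, ranging over index sets $L$ whose size is one more than the common exterior degree of our tableaux and monomials $v^\beta$ of degree one less than their common symmetric degree. Given a tableau $v_{I_k}\otimes v^{\alpha^k}$ in our representation with $\ell \notin I_k$ (so $\alpha^k_\ell = \gamma_\ell \geq 1$), I would apply this relation with $L := I_k \cup \{\ell\}$ and $v^\beta := v^{\alpha^k - \epsilon_\ell}$; these meet the numerical constraints precisely because $\ell \notin I_k$ and $\gamma_\ell \geq 1$. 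The $m = \ell$ term of $\kappa(v_L \otimes v^\beta)$ is $\pm\, v_{I_k}\otimes v^{\alpha^k}$, so solving the relation for it expresses $v_{I_k}\otimes v^{\alpha^k}$ as a linear combination of the tableaux $v_{(I_k\setminus\{m\})\cup\{\ell\}} \otimes v_m v^{\alpha^k - \epsilon_\ell}$ with $m \in I_k$. Each of these contains $\ell$ in its exterior index (as $m \neq \ell$), hence has $v_\ell$-exponent $\gamma_\ell - 1$ in its symmetric factor; and the move preserves multidegree, since $v_{(I_k\setminus\{m\})\cup\{\ell\}} \cdot v_m v^{\alpha^k - \epsilon_\ell} = v_{I_k} \cdot v^{\alpha^k}$ as monomials.

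Finally I would apply this move once to each tableau of the representation having $\ell \notin I_k$, leave the tableaux already containing $\ell$ alone, and collect like terms. Since every exterior index produced by the move contains $\ell$, no tableau with $\ell\notin I_k$ is ever re-created, so a single pass suffices and produces a representation of $f$ with $\alpha^k_\ell = N$ for all $k$. I expect the only real work to be bookkeeping: checking at each application that $L$ and $v^\beta$ satisfy the degree constraints of \ref{chunk:SchurDiscussion}, so that a genuine defining relation is used, and noting that the unspecified signs are irrelevant to the conclusion. The one genuinely substantive point --- and the reason the normalization is toward $\ell \in I_k$ rather than the reverse --- is that moving $v_\ell$ \emph{into} the exterior factor is accomplished by a single Koszul relation, whereas moving it out would require a less direct argument.
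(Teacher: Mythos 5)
Your argument is correct and essentially identical to the paper's proof: both normalize toward $\ell \in I_k$ (so $\alpha^k_\ell = \gamma_\ell - 1$) by applying the Koszul straightening relation with index set $I_k \cup \{\ell\}$ and symmetric part $v^{\alpha^k - \epsilon_\ell}$ to each offending tableau. Your write-up is a bit more explicit than the paper's (spelling out the Koszul relation being used, handling $\gamma_\ell = 0$ separately, and noting that one pass suffices since no new tableaux with $\ell \notin I_k$ appear), but it is the same argument.
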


\begin{proof}
Suppose the exponent of $v_\ell$ in the multidegree of $f$ is $N+1$. By definition of the multidegree, all tableaux $v_{I_k} \otimes v^{\alpha^k}$ appearing in the support of $f$ have the property that $\alpha_\ell^k = N$ or $N+1$. If $\alpha_\ell^k = N+1$ for some $k$, then $\ell \notin I_k$ and we may use the straightening relations to rewrite $v_{I_k} \otimes v^{\alpha^k}$ in terms of tableaux with the exponent of $v_\ell$ being reduced by $1$.
\end{proof}

\section{The Complex and Its Cohomology}\label{sec:complexAndHomology}

In this section, we construct the complex (\ref{eqn:theEquivariantComplex}) mentioned in the introduction and compute its cohomology explicitly (see Theorem \ref{thm:exactEquivariantComplex}). As mentioned in the introduction, the existence of nontrivial equivariant maps $S_{(m-i,1^i)} (V) \to S_{(m-i-1,1^{i+1})} (V)$ is a phenomenon unique to the positive characteristic case. We begin this section by defining these maps explicitly and proving that they are well-defined. Note:

\begin{remark}
All results in this section hold when vector spaces over a field $k$ are replaced with free modules over a commutative ring $R$. 
\end{remark}

\begin{definition}\label{def:thePhiMap}
Let $V$ be a vector space over a field of positive characteristic $p$; assume $\dim V = n$ and $v_1 , \dots , v_n$ is a basis for $V$. Define $\phi$ to be the map:
\begingroup\allowdisplaybreaks
\begin{align*}
    \bigwedge^i V \otimes S_j V &\xrightarrow{1 \otimes \Delta} \bigwedge^i V \otimes V \otimes S_{j-1} V \\
    &\xrightarrow{m \otimes 1} \bigwedge^{i+1} V \otimes S_{j-1} V ,
\end{align*}
\endgroup
where $\Delta$ denotes comultiplication in the symmetric algebra and $m$ denotes multiplication in the exterior algebra. More explicitly, $\phi$ acts on an element as so:
$$\phi (v_I \otimes v^\alpha) = \sum_{i=1}^n \alpha_i v_I \w v_i \otimes v^{\alpha - \epsilon_i}.$$
\end{definition}

\begin{prop}
Let $m$ be any integer divisible by $p$. The map $\phi$ of Definition \ref{def:thePhiMap} descends to a well-defined map
$$\phi : S_{(m-i, 1^i)} V \to S_{(m-i-1,1^{i+1})} V.$$
\end{prop}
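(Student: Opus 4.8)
The plan is to show that $\phi$ carries the defining submodule (or relations) of $S_{(m-i,1^i)}V$ into the defining submodule (or relations) of $S_{(m-i-1,1^{i+1})}V$, using the description of hook Schur modules from \ref{chunk:SchurDiscussion}. Recall from that discussion that, under the notational conventions of Remark \ref{rk:abuseNotation}, $S_{(m-i,1^i)}V$ sits inside $\bigwedge^{i+1}V \otimes S_{m-i-1}V$ as $\ker \kappa_{m-i-1,\,i+1}$, and $S_{(m-i-1,1^{i+1})}V$ sits inside $\bigwedge^{i+2}V \otimes S_{m-i-2}V$ as $\ker\kappa_{m-i-2,\,i+2}$. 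So after unwinding the index bookkeeping, the claim becomes: the map $\phi\colon \bigwedge^{i+1}V\otimes S_{m-i-1}V \to \bigwedge^{i+2}V\otimes S_{m-i-2}V$ sends $\ker\kappa_{m-i-1,i+1}$ into $\ker\kappa_{m-i-2,i+2}$. Equivalently, it suffices to prove $\kappa_{m-i-2,i+2}\circ \phi = \psi \circ \kappa_{m-i-1,i+1}$ for some map $\psi$ (so that $\phi$ restricts to kernels); one expects $\psi$ to be, up to sign and a scalar, the ``$\phi$-type'' map $\bigwedge^iV\otimes S_{m-i}V \to \bigwedge^{i+1}V\otimes S_{m-i-1}V$ at the previous index — i.e. $\phi$ should be a morphism of the tautological Koszul complex to a shifted copy of itself.

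First I would write everything at the level of the ambient tensor products $\bigwedge^\bullet V\otimes S_\bullet V$ and compute the composite $\kappa\circ\phi$ and $\phi\circ\kappa$ explicitly on a basis element $v_I\otimes v^\alpha$ using the explicit formulas: $\phi(v_I\otimes v^\alpha)=\sum_i \alpha_i\, v_I\wedge v_i \otimes v^{\alpha-\epsilon_i}$ and $\kappa_{a,b}(v_J \otimes v^\beta) = \sum_{\ell\in J}\pm\, v_{J\setminus\ell}\otimes v_\ell v^\beta$ (the sign being determined by the position of $\ell$ in $J$). Both composites are double sums over pairs of indices; I would split each into the ``diagonal'' terms (where the two indices coincide) and the ``off-diagonal'' terms. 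The off-diagonal terms should match up directly after a sign/reindexing check — this is exactly the statement that $\Delta$ and $m$ on each factor are compatible and that wedging/multiplying commute in the appropriate Koszul sense — and the diagonal terms should either cancel in pairs or contribute a term proportional to the identity weighted by $\sum_i\alpha_i = |\alpha| = m - i - 1$. This is where the hypothesis $p \mid m$ enters: modulo $p$, the surviving diagonal contribution is a multiple of $m$ (or of $|\alpha|+ (\text{something})\equiv m$), hence vanishes, which is precisely what forces $\kappa\circ\phi$ and $\phi\circ\kappa$ to be compatible on the nose rather than off by a multiple of the identity. (Indeed, the introduction already flags that the characteristic hypothesis is essential, so the proof must use it somewhere, and this diagonal term is the natural place.)

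Given the commuting relation $\kappa_{m-i-2,i+2}\circ\phi = \pm\,\phi\circ\kappa_{m-i-1,i+1}$, well-definedness is immediate: if $x \in \bigwedge^{i+1}V\otimes S_{m-i-1}V$ represents an element of $S_{(m-i,1^i)}V = \ker\kappa_{m-i-1,i+1}$, then $\kappa_{m-i-2,i+2}(\phi(x)) = \pm\phi(\kappa_{m-i-1,i+1}(x)) = 0$, so $\phi(x)\in\ker\kappa_{m-i-2,i+2} = S_{(m-i-1,1^{i+1})}V$; and $\phi$ on the ambient tensor product is already a genuine linear (indeed $\gl(V)$-equivariant) map, so it restricts to a genuine map on the submodules. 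Alternatively, if one prefers to work with the quotient presentation $S_{(a,1^b)}V = \operatorname{coker}\kappa_{a-2,b+2}$ and the straightening relations, one shows instead that $\phi$ sends the image of $\kappa$ into the image of $\kappa$; this is the same computation read in the other direction, and Proposition \ref{prop:preserveExponent} is the tool for reducing a general multigraded element to a standard form if needed. I expect the only real obstacle to be the sign and index bookkeeping in matching the off-diagonal terms — there is a shift $i \mapsto i+1$ in the exterior degree and $j \mapsto j-1$ in the symmetric degree, and the Koszul signs must be tracked carefully — but no conceptual difficulty beyond that, with the characteristic hypothesis doing exactly one job: killing the diagonal term.
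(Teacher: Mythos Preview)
Your proposal is correct and takes essentially the same approach as the paper: both establish that $\phi$ commutes with the Koszul differential (the paper calls it $\psi$) by an explicit computation on basis elements, splitting into off-diagonal terms that match and diagonal terms whose total contribution is $|\alpha|+|I|=m\equiv 0\pmod p$. The paper works with the cokernel presentation of the hook Schur module rather than the kernel, but as you yourself note this is the same computation read in the other direction.
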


\begin{proof}
Recall that the Schur module $S_{(m-i,1^i)}$ is obtained as the cokernel of the natural map
$$\psi: \bigwedge^{i+2} V \otimes S_{m-i-2} V \to \bigwedge^{i+1} V \otimes S_{m-i-1} V,$$
whence it suffices to show that the following diagram commutes:
$$\xymatrix{\bigwedge^{i+1} V \otimes S_{m-i-1} V \ar[r]^-{\psi}  \ar[d]^-{\phi} & \bigwedge^{i} V \otimes S_{m-i} V \ar[d]^-{\phi}  \\
\bigwedge^{i+2} V \otimes S_{m-i-2} V \ar[r]^-{\psi} & \bigwedge^{i+1} V \otimes S_{m-i-1} V.}$$
This is a straightforward computation; let $I = (j_1 < \cdots < j_{i+1})$ be an indexing set of size $i+1$ and $\alpha$ an exponent vector with $|\alpha| = m-i-1$. Going clockwise around the diagram:
\begingroup\allowdisplaybreaks
\begin{align*}
    v_I \otimes v^\alpha &\mapsto \sum_{i \in I} \sgn(i \in I) v_{I \backslash i} \otimes v^{\alpha+ \epsilon_i} \\
    &\mapsto \sum_{i \in I} \sum_{j \neq i} \sgn (i \in I) \alpha_j v_{I \backslash i} \w v_j \otimes v^{\alpha+ \epsilon_i - \epsilon_j} \\
    &+ \sum_{i \in I} (\alpha_i + 1) v_I \otimes v^\alpha .
\end{align*}
\endgroup
Moving counterclockwise:
\begingroup\allowdisplaybreaks
\begin{align*}
    v_I \otimes v^\alpha &\mapsto \sum_{j \notin I} \alpha_j v_I \w v_j \otimes v^{\alpha - \epsilon_j} \\
    &\mapsto -\sum_{j \notin I} \sum_{\substack{i \in I \\
    i \neq j}} \sgn( i \in I) \alpha_j v_{I \backslash i} \w v_j \otimes v^{\alpha + \epsilon_i - \epsilon_j} \\
    &-\sum_{i \notin I} \alpha_i v_I \otimes v^\alpha. 
\end{align*}
\endgroup
Comparing both of the above, notice first that both of the terms $\sum_{i \in I} \sum_{j \neq i} \sgn (i \in I) \alpha_j v_{I \backslash i} \w v_j \otimes v^{\alpha+ \epsilon_i - \epsilon_j}$ and $-\sum_{j \notin I} \sum_{\substack{i \in I \\
    i \neq j}} \sgn( i \in I) \alpha_j v_{I \backslash i} \w v_j \otimes v^{\alpha + \epsilon_i - \epsilon_j}$ are $0$ since we may sum over all $j < i$ and relabel (this is identical to the proof that the Koszul complex is a complex). For the leftover terms, simply observe that
    \begingroup\allowdisplaybreaks
    \begin{align*}
         \sum_{i \in I} (\alpha_i + 1) + \sum_{i \notin I} \alpha_i &= |\alpha| + |I| \\
         &= m-i-1 + i+1 = 0 \quad \textrm{since} \ p \mid m.
    \end{align*}
    \endgroup
\end{proof}

\begin{remark}
By an abuse of notation, we will make no distinction between the map $\phi$ and the induced map on Schur modules.
\end{remark}

\begin{cor}\label{cor:seqOfMaps}
There is a well defined sequence of maps
$$0 \to S_m(V)  \to S_{(m-1,1)} V \to \cdots \to \bigwedge^m V \to 0,$$
where each map is induced by $\phi$.
\end{cor}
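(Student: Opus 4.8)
The plan is to obtain this as an essentially formal consequence of the preceding proposition, which is where the real content lies; the corollary amounts to checking that the maps $\phi$ can be strung together and that the two endpoints fit the pattern. First I would record that every term of the displayed sequence is the Schur module of a hook partition of $m$: under the conventions fixed in Section \ref{sec:backgroundAndNotation} one has $S_m(V) = S_{(m)}(V) = S_{(m,1^0)}(V)$ and $\bigwedge^m V = S_{(1^m)}(V) = S_{(1,1^{m-1})}(V)$, and in general the $r$-th term is $S_{(m-r,1^r)}(V)$ for $r = 0, 1, \dots, m-1$.

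Next, continuing under the divisibility hypothesis $p \mid m$ of that proposition (which is what makes $\phi$ descend at all), I would apply it once for each $r$ with $0 \leq r \leq m-2$ to get a well-defined --- in fact $\gl(V)$-equivariant --- map
$$\phi \colon S_{(m-r,1^r)}(V) \longrightarrow S_{(m-r-1,1^{r+1})}(V).$$
Because the target $S_{(m-r-1,1^{r+1})}(V) = S_{(m-(r+1),1^{r+1})}(V)$ is exactly the source of the map furnished by the proposition at the next value $r+1$, these maps can be composed in the stated order, while the outer arrows $0 \to S_m(V)$ and $\bigwedge^m V \to 0$ are zero maps and need no justification. Concatenating yields
$$0 \to S_{(m,1^0)}(V) \xrightarrow{\phi} S_{(m-1,1^1)}(V) \xrightarrow{\phi} \cdots \xrightarrow{\phi} S_{(2,1^{m-2})}(V) \xrightarrow{\phi} S_{(1,1^{m-1})}(V) \to 0,$$
which is the asserted sequence.

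The one place to be slightly careful is the bookkeeping of how $\phi$ is realized at each stage: the map out of $S_{(m-r,1^r)}(V)$ is computed by lifting an element to $\bigwedge^{r+1}V \otimes S_{m-r-1}V$, applying the explicit formula of Definition \ref{def:thePhiMap}, and projecting onto the cokernel presentation of $S_{(m-r-1,1^{r+1})}(V)$; the resulting value already lives in $\bigwedge^{r+2}V \otimes S_{m-r-2}V$, so it serves directly as the lift to which the next copy of $\phi$ is applied, and well-definedness at that step is precisely the previous proposition with $r$ replaced by $r+1$. I do not expect a genuine obstacle here: the substantive point (that the commuting square holds only because $|\alpha|+|I| = m$ is a multiple of $p$) has already been settled, and what remains is index-matching, the endpoint identifications above, and noting that the $p \mid m$ hypothesis simply propagates from the proposition.
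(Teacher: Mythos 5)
Your proposal is correct and matches the paper's (implicit) reasoning: the corollary is stated without proof precisely because it is the immediate concatenation of the maps furnished by the preceding proposition, and your bookkeeping of the hook-partition indices and the endpoint identifications $S_m(V) = S_{(m,1^0)}(V)$, $\bigwedge^m V = S_{(1,1^{m-1})}(V)$ is exactly what makes that concatenation work.
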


\begin{prop}\label{prop:theyAreComplex}
The sequence of maps
$$0 \to S_m(V) \to S_{(m-1,1)} V \to \cdots \to \bigwedge^m V \to 0$$
of Corollary \ref{cor:seqOfMaps} forms a complex.
\end{prop}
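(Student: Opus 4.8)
The plan is to show $\phi^2 = 0$ by direct computation on a basis element $v_I \otimes v^\alpha$ of $\bigwedge^i V \otimes S_j V$ and checking the result vanishes. Since $\phi$ is defined on the ambient tensor products (not just the Schur quotients) and we already know it descends, it suffices to verify $\phi \circ \phi = 0$ as maps $\bigwedge^i V \otimes S_j V \to \bigwedge^{i+2} V \otimes S_{j-2} V$; the induced statement on Schur modules then follows automatically. Concretely, using the explicit formula $\phi(v_I \otimes v^\alpha) = \sum_{k} \alpha_k\, v_I \w v_k \otimes v^{\alpha - \epsilon_k}$ from Definition \ref{def:thePhiMap}, I would apply $\phi$ twice and collect terms.

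The key steps, in order. First, expand: applying $\phi$ once gives $\sum_{k : k \notin I} \alpha_k\, v_I \w v_k \otimes v^{\alpha - \epsilon_k}$ (terms with $k \in I$ die because $v_I \w v_k = 0$). Applying $\phi$ again to each summand yields a double sum over pairs $(k, \ell)$ with $k \notin I$ and $\ell \notin I \cup \{k\}$, with coefficient $\alpha_k(\alpha_\ell - \delta_{k\ell}) = \alpha_k \alpha_\ell$ (the $k=\ell$ case again contributes nothing since $\ell \neq k$ is forced) times $v_I \w v_k \w v_\ell \otimes v^{\alpha - \epsilon_k - \epsilon_\ell}$. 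Second, observe this is a sum over unordered... actually over ordered pairs $(k,\ell)$ of distinct indices outside $I$, and the monomial part $v^{\alpha - \epsilon_k - \epsilon_\ell}$ is symmetric in $k \leftrightarrow \ell$ while the wedge part $v_I \w v_k \w v_\ell$ is antisymmetric in $k \leftrightarrow \ell$. Third, conclude that the summand for $(k,\ell)$ cancels the summand for $(\ell,k)$, since $\alpha_k\alpha_\ell = \alpha_\ell \alpha_k$ but $v_I \w v_k \w v_\ell = - v_I \w v_\ell \w v_k$; pairing up the two orderings of each two-element subset of $\{1,\dots,n\}\setminus I$ shows the entire expression is zero. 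Finally, note this computation is valid over any commutative ring and does not even use $p \mid m$ — that hypothesis was only needed for $\phi$ to descend to the Schur quotients in the first place.

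I do not expect a genuine obstacle here: this is the standard "Koszul-type" cancellation, identical in spirit to the argument already invoked in the previous proof (that the cross terms vanish "by summing over all $j<i$ and relabeling"). The only thing to be careful about is bookkeeping of which index ranges survive — ensuring the diagonal terms $k = \ell$ and the terms with $k \in I$ or $\ell \in I$ are correctly discarded before invoking the antisymmetry argument — but this is routine. Alternatively, one could phrase the proof more slickly by factoring $\phi$ through the maps $1 \otimes \Delta$ and $m \otimes 1$ and using that the composite $\bigwedge^i V \otimes S_j V \to \bigwedge^{i+2} V \otimes S_{j-2}V$ contains a factor of the multiplication $V \otimes V \to \bigwedge^2 V$ precomposed with a cocommutative comultiplication $S_2 V$-component, which is manifestly zero; I would mention this conceptual reformulation as a remark but carry out the explicit computation as the main proof.
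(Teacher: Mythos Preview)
Your proof is correct, but the paper's argument is different and much shorter. You carry out the explicit Koszul-type cancellation on basis elements, which is perfectly valid and requires no extra machinery. The paper instead observes that $\phi$ is the linear dual of the Koszul differential $\bigwedge^i V \otimes D_j(V) \to \bigwedge^{i-1} V \otimes D_{j+1}(V)$ given by multiplication by the trace element $t \in V^* \otimes V$; since that differential already squares to zero, so does its dual, and passing to the quotient by a $\phi$-stable subcomplex preserves this. Your approach has the advantage of being completely self-contained and of making transparent that no hypothesis on the characteristic or on $m$ is needed at this stage (a point you correctly note). The paper's approach buys brevity and situates $\phi$ conceptually as a Koszul differential in disguise, at the cost of invoking divided powers and duality. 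Your closing ``slick'' alternative via the cocommutativity of $\Delta$ is in the same conceptual spirit as the paper's argument, though it is not literally the same identification.
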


\begin{proof}
The map $\phi$ is equivalently described as the dual to multiplication $\bigwedge^i V \otimes D_j (V) \to \bigwedge^{i-1} V \otimes D_{j+1} (V)$ (where $D_\bullet (V)$ denotes the divided power algebra) by the trace element $t \in V^* \otimes V$, so $\phi^2 = 0$. Descending to a quotient does not change this fact.
\end{proof}

Combining the above results implies that the following definition is indeed well-defined.

\begin{definition}
Given a vector space $V$ over a field of characteristic $p>0$ and an integer $m$ divisible by $p$, let $N_m (V)$ denote the complex of Proposition \ref{prop:theyAreComplex}.
\end{definition}

\begin{remark}
We will use the convention that the complex $N_m (V)$ is indexed cohomologically; that is, $N_m (V)_0 = S_m (V)$ and $N_m (V)_m = \bigwedge^m V$. 
\end{remark}

As it turns out, the complex $N_m (V)$ is more than just a complex of vector spaces: it is also a complex of $\gl (V)$-modules.

\begin{prop}
The differentials of $N_m (V)$ are $\gl (V)$-equivariant.
\end{prop}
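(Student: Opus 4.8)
The plan is to observe that the differential $\phi$ is built entirely out of the bialgebra structure maps of the symmetric and exterior algebras, and each of those structure maps is visibly $\gl(V)$-equivariant. Concretely, $\phi$ is the composite $\bigwedge^i V \otimes S_j V \xrightarrow{1 \otimes \Delta} \bigwedge^i V \otimes V \otimes S_{j-1} V \xrightarrow{m \otimes 1} \bigwedge^{i+1} V \otimes S_{j-1} V$, where $\Delta$ is comultiplication in $S_\bullet V$ and $m$ is multiplication in $\bigwedge^\bullet V$. The action of $g \in \gl(V)$ on all the relevant modules is the one induced functorially from the action on $V$, so $g$ acts diagonally on tensor products and commutes with any map that is a morphism of $\gl(V)$-representations; thus it suffices to note that $\Delta \colon S_j V \to V \otimes S_{j-1} V$ and $m \colon \bigwedge^i V \otimes V \to \bigwedge^{i+1} V$ are each $\gl(V)$-equivariant. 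This is standard: the full comultiplication and multiplication on $S_\bullet V$ and $\bigwedge^\bullet V$ are maps of $\gl(V)$-algebras (indeed of Hopf algebras) because $V$ sits inside both in degree $1$ as a $\gl(V)$-submodule and these operations are determined functorially from $V$; the degree-$(1, j-1)$ and degree-$(i,1)$ strands are $\gl(V)$-subquotients, so the restrictions remain equivariant.

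First I would record that every module appearing in $N_m(V)$ — namely $\bigwedge^{i+1} V \otimes S_{m-i-1} V$ and its Schur quotient $S_{(m-i-1,1^{i+1})} V$ — is naturally a $\gl(V)$-module via the diagonal action, and that the quotient map $\bigwedge^{i+1} V \otimes S_{m-i-1} V \onto S_{(m-i-1,1^{i+1})} V$ is $\gl(V)$-equivariant (the straightening relations, generated by the image of $\psi$, span a $\gl(V)$-submodule since $\psi$ is itself built from $\Delta$ and $m$). Next I would invoke the equivariance of $\Delta$ and $m$ to conclude that the lift $\phi \colon \bigwedge^i V \otimes S_j V \to \bigwedge^{i+1} V \otimes S_{j-1} V$ is $\gl(V)$-equivariant, being a composite of equivariant maps. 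Finally, since $\phi$ on $S_{(m-i,1^i)} V$ is defined by passing to the quotient from this equivariant lift (as established in the preceding propositions), and the quotient maps are equivariant, the induced differential is $\gl(V)$-equivariant. Alternatively one may argue directly from the explicit formula $\phi(v_I \otimes v^\alpha) = \sum_i \alpha_i\, v_I \wedge v_i \otimes v^{\alpha - \epsilon_i}$: this is exactly the coefficient-extraction description of the dual of multiplication by the trace tensor $t \in V^* \otimes V$, and the trace tensor is the canonical $\gl(V)$-invariant element of $V^* \otimes V$, so contraction against it commutes with the $\gl(V)$-action — this is the same observation already used in the proof of Proposition \ref{prop:theyAreComplex}.

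There is essentially no obstacle here; the only thing that requires a sentence of care is making sure the $\gl(V)$-module structure on the Schur module $S_{(m-i,1^i)} V$ is the one induced from the ambient tensor product (so that equivariance of the lift descends), which is immediate because $S_{(m-i,1^i)} V$ is defined as a subquotient of tensor powers of $V$ with $\gl(V)$ acting functorially throughout. I would therefore keep the proof to a couple of lines: state that $\Delta$ and $m$ are $\gl(V)$-equivariant since they are morphisms of the Hopf algebra structures functorial in $V$, hence so is their composite $\phi$ and its descent to Schur modules; or, equivalently, cite the trace-tensor description together with the $\gl(V)$-invariance of $t$.

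\begin{proof}
The map $\phi$ is the composite
\[
\bigwedge^i V \otimes S_j V \xrightarrow{1 \otimes \Delta} \bigwedge^i V \otimes V \otimes S_{j-1} V \xrightarrow{m \otimes 1} \bigwedge^{i+1} V \otimes S_{j-1} V,
\]
and the comultiplication $\Delta$ on $S_\bullet V$ and the multiplication $m$ on $\bigwedge^\bullet V$ are morphisms of $\gl(V)$-representations, since the bialgebra structures on $S_\bullet V$ and $\bigwedge^\bullet V$ are functorial in $V$ and $\gl(V)$ acts on every module in sight diagonally through its action on $V$. Hence $\phi$, being a composite of $\gl(V)$-equivariant maps, is $\gl(V)$-equivariant on $\bigwedge^i V \otimes S_j V$. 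Equivalently, by the description in the proof of Proposition \ref{prop:theyAreComplex}, $\phi$ is (up to the identification $S_\bullet \cong D_\bullet^*$ in the relevant degrees) dual to multiplication by the trace element $t \in V^* \otimes V$, and $t$ is $\gl(V)$-invariant, so contraction against it commutes with the $\gl(V)$-action.

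The Schur module $S_{(m-i-1,1^{i+1})} V$ is, by \ref{chunk:SchurDiscussion}, the cokernel of $\psi \colon \bigwedge^{i+2} V \otimes S_{m-i-2} V \to \bigwedge^{i+1} V \otimes S_{m-i-1} V$, a map built from the same structure maps $\Delta$ and $m$ and therefore itself $\gl(V)$-equivariant; consequently the quotient map $\bigwedge^{i+1} V \otimes S_{m-i-1} V \onto S_{(m-i-1,1^{i+1})} V$ is $\gl(V)$-equivariant, and the $\gl(V)$-module structure on the Schur module is the one inherited from the ambient tensor product. Since the differential of $N_m(V)$ on $S_{(m-i,1^i)} V$ is obtained from the equivariant lift $\phi$ by passing to these equivariant quotients, it is $\gl(V)$-equivariant.
\end{proof}
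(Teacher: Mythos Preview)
Your proof is correct and follows essentially the same approach as the paper: observe that $\phi$ is a composition of the equivariant structure maps $\Delta$ and $m$, and then note that equivariance descends to the Schur quotients because the subcomplex being quotiented out is itself $\gl(V)$-invariant. The paper's version is simply terser, omitting the explicit verification that the quotient map is equivariant and the alternative trace-tensor description you include.
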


\begin{proof}
The map $\phi : \bigwedge^i V \otimes S_j (V) \to \bigwedge^{i+1} V \otimes S_{j-1} (V)$ is $\gl (V)$-equivariant for all $i$, $j$ (since it is a composition of equivariant maps). Again, descending to the quotient does not change this fact (since in this case, the subcomplex we are quotienting by is also $\gl (V)$-invariant).
\end{proof}

Now, the remainder of this section will be dedicated to computing the cohomology of the complex $N_m (V)$ for a given $m$. 

\begin{obs}
Let $V' := V/ kv_\ell = \bigoplus_{j \neq \ell}^n kv_j$. Then for all integers $i$, $j$ there is a split short exact sequence of vector spaces:
$$0 \to \bigwedge^{i-1} V' \otimes S_{j-1} (V) \to S_{(j,1^i)} V  \to S_{(j,1^i)} V' \to 0,$$
where the left map is the composition
$$\bigwedge^{i-1} V' \otimes S_{j-1} (V) \xrightarrow{v_\ell \w -} \bigwedge^i V \otimes S_{j-1} V \to S_{(j,1^i)} V,$$
and the right map is induced by the projection $V \to V'$.
\end{obs}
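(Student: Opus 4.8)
The plan is to verify the short exact sequence by combining the standard presentation of hook Schur modules from \ref{chunk:SchurDiscussion} with the obvious decomposition of the exterior algebra relative to a hyperplane. First I would set up notation: write $V = V' \oplus k v_\ell$, so that for each $i$ the exterior power splits as $\bigwedge^i V = \bigwedge^i V' \oplus \bigl(v_\ell \wedge \bigwedge^{i-1} V'\bigr)$, and correspondingly $\bigwedge^i V \otimes S_{j-1}(V)$ decomposes. Applying this at cohomological spots $i-1$, $i$, $i+1$ of the tautological Koszul complex $\kappa_{\bullet,\bullet}$ (whose differential is a comultiply-then-multiply composite), one checks that $v_\ell \wedge -$ followed by the quotient map $\bigwedge^i V \otimes S_{j-1}(V) \to S_{(j,1^i)}V$ is well-defined: an element in the image of $\kappa$ landing in $v_\ell \wedge \bigwedge^{i-1} V' \otimes S_{j-1}(V)$ pulls back to an element of $\bigwedge^{i-1}V' \otimes S_{j-1}(V)$ already in the image of the corresponding $\kappa$ restricted to the $V'$-summand, because the Koszul differential is compatible with the hyperplane splitting (the multiplication map $V \otimes S \to S$ preserves the decomposition by powers of $v_\ell$, and wedging with $v_\ell$ is injective on $\bigwedge^{\bullet} V'$).

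Next I would identify the cokernel. The projection $\pi: V \to V'$ induces $S_{(j,1^i)}V \to S_{(j,1^i)}V'$ (this is functorial in the vector space, using either presentation of the Schur module), and it is surjective since $\pi$ is. The kernel: an element $v_I \otimes v^\alpha$ of $S_{(j,1^i)}V$ maps to zero in $S_{(j,1^i)}V'$ exactly when, after straightening, every tableau in its support involves $v_\ell$ — either as one of the wedge factors or in the symmetric part. Using Proposition \ref{prop:preserveExponent} to normalize the exponent of $v_\ell$ in the symmetric part to a fixed value $N$, and the straightening relations for hook modules (which for $S_{(j,1^i)}$ amount to: $v_\ell$ may be moved from the symmetric slot into the wedge slot at the cost of a sign and a sum over the wedge indices), one reduces any kernel element to a combination of tableaux of the form $v_\ell \wedge v_{I'} \otimes v^{\alpha'}$ with $I' \subset \{1,\dots,n\}\setminus\{\ell\}$ and $v_\ell \nmid v^{\alpha'}$, i.e. to the image of $v_\ell \wedge -$ from $\bigwedge^{i-1}V' \otimes S_{j-1}(V)$. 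Injectivity of the left map is then immediate from the exterior-algebra splitting: $v_\ell \wedge -$ is injective on $\bigwedge^{i-1} V'$, and this persists after tensoring with $S_{j-1}(V)$ and passing to the quotient, since the $V'$-summand's image meets the submodule generated by the other $\kappa$-relations trivially in the relevant degree.

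The sequence splits because the surjection $S_{(j,1^i)}V \to S_{(j,1^i)}V'$ admits the section induced by the inclusion $V' \hookrightarrow V$; functoriality gives $S_{(j,1^i)}(V') \to S_{(j,1^i)}(V)$, and composing with the projection is the identity on $S_{(j,1^i)}V'$. Hence the sequence is split exact, as claimed.

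I expect the main obstacle to be the exactness at the middle — specifically, the clean identification of the kernel of $S_{(j,1^i)}V \to S_{(j,1^i)}V'$ with the image of $v_\ell \wedge -$. The subtlety is that straightening relations for hook Schur modules are not entirely trivial, and one must be careful that the normalization step (Proposition \ref{prop:preserveExponent}) together with the relation allowing transfer of $v_\ell$ between the symmetric and exterior slots genuinely exhausts the kernel without introducing spurious terms outside the claimed image. The cleanest route around this is probably dimension-counting: once the left map is shown injective and the right map surjective, verifying that $\dim S_{(j,1^i)}V = \dim\bigl(\bigwedge^{i-1}V' \otimes S_{j-1}(V)\bigr) + \dim S_{(j,1^i)}V'$ — via the hook content / hook-length formula for Schur-module dimensions, or equivalently a direct count of standard tableaux — forces exactness in the middle without any delicate chasing of straightening relations.
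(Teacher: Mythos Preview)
The paper records this statement as an \emph{Observation} without proof, so there is no argument of the paper's own to compare against. Your outline is essentially the natural way to supply the details: functoriality of $S_{(j,1^i)}(-)$ gives both the surjection to $S_{(j,1^i)}(V')$ and its section from the inclusion $V'\hookrightarrow V$, the composite vanishes because $v_\ell$ dies under $V\to V'$, and your straightening argument (moving the $v_\ell$ from the symmetric slot into the wedge via the Koszul relation) correctly identifies the kernel with the image of $v_\ell\wedge-$.

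One correction to your self-assessment: the step that actually needs care is not middle exactness but \emph{injectivity} of the left map. Your assertion that injectivity ``persists after passing to the quotient, since the $V'$-summand's image meets the submodule generated by the other $\kappa$-relations trivially'' is true, but it is not immediate---checking that $(v_\ell\wedge\bigwedge^{\bullet}V')\otimes S_{j-1}(V)$ meets $\im\kappa$ only in zero requires a short Koszul-exactness argument over $V'$ with coefficients in $S(V)$. Your dimension-count fallback handles this too, but with the roles reversed from what you propose: run the straightening argument to get $\ker\subseteq\im$ in the middle (this is the easier of the two direct arguments), and then surjectivity on the right together with the dimension identity forces injectivity on the left. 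All the ingredients are present in your sketch; only the bookkeeping of which step the dimension count is purchasing needs to be swapped.
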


\begin{setup}\label{set:exactnessSet}
Let $V' := V/ kv_\ell = \bigoplus_{j \neq \ell}^n kv_j$. Let $L_m (V,v_\ell)$ denote the complex
$$0 \to v_i^{m-1} \hookrightarrow S_{m-1}V \to V' \otimes S_{m-2} V \to \cdots \to \bigwedge^{m-1} V' \otimes V \to \bigwedge^m V' \to 0$$
where the differential $d$ is induced by multiplication by the element $r \in V' \otimes V^*$ corresponding to the projection $V \to V'$. Let $s \in (V')^* \otimes V$ be the element corresponding to the inclusion $V' \hookrightarrow V$ and let $h$ be the map induced by multiplication by $s$.
\end{setup}

\begin{obs}
Adopt notation and hypotheses as in Setup \ref{set:exactnessSet}. Then there is a short exact sequence of complexes
$$0 \to L_m (V,v_\ell) \xrightarrow{v_\ell \w -} N_m (V) \to N_m (V') \to 0,$$
where the left map is left multiplication by $v_\ell$ and the right map is induced by the projection $V \to V'$.
\end{obs}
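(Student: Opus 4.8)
The plan is to construct the sequence one cohomological degree at a time, using the short exact sequence recorded in the Observation immediately preceding Setup \ref{set:exactnessSet}, and then to check compatibility with the differentials. First I would note that $\phi$, being a composite of a comultiplication and a multiplication, is natural in the underlying space and descends functorially to the Schur quotients; hence the projection $\pi\colon V\to V'$ induces a chain map $\pi_*\colon N_m(V)\to N_m(V')$ with degree-$i$ component $S_{(m-i,1^i)}(\pi)$, and this map is surjective in each degree (Schur modules carry surjections to surjections, as also recorded in that Observation). Being a chain map, $\pi_*$ has as its degreewise kernel a subcomplex $K_\bullet:=\ker(\pi_*)\subseteq N_m(V)$, so that $0\to K_\bullet\to N_m(V)\xrightarrow{\pi_*}N_m(V')\to 0$ is already a short exact sequence of complexes; everything then reduces to identifying $K_\bullet$, as a complex, with $L_m(V,v_\ell)$.

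For the identification, the Observation preceding Setup \ref{set:exactnessSet} tells us that in each degree $i$ left multiplication by $v_\ell$ gives an isomorphism from the degree-$i$ term of $L_m(V,v_\ell)$ onto $K_i=\ker\bigl(S_{(m-i,1^i)}V\to S_{(m-i,1^i)}V'\bigr)$, the two extreme degrees being checked directly (the kernels there being $v_\ell\cdot S_{m-1}(V)\subseteq S_m(V)$ and $v_\ell\wedge\textstyle\bigwedge^{m-1}V'\subseteq\bigwedge^m V$). It then remains to see that this degreewise isomorphism $L_m(V,v_\ell)\to K_\bullet$ is a chain map, i.e.\ that it carries the differential of $L_m(V,v_\ell)$ --- multiplication by the element $r\in V'\otimes V^*$ of the projection $\pi$ --- to the restriction of $\phi$. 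This is the one genuine computation: by Definition \ref{def:thePhiMap}, for $v_J\in\bigwedge^\bullet V'$ (so $\ell\notin J$) and a monomial $v^\alpha$,
$$\phi\bigl(v_\ell\wedge v_J\otimes v^\alpha\bigr)=\sum_{k=1}^n\alpha_k\,(v_\ell\wedge v_J\wedge v_k)\otimes v^{\alpha-\epsilon_k}=v_\ell\wedge\Bigl(\sum_{k\neq\ell}\alpha_k\,(v_J\wedge v_k)\otimes v^{\alpha-\epsilon_k}\Bigr),$$
the $k=\ell$ term vanishing since $v_\ell\wedge v_\ell=0$; and the parenthesized sum equals $\sum_k\alpha_k\,(v_J\wedge\pi(v_k))\otimes v^{\alpha-\epsilon_k}$, which is the image of $v_J\otimes v^\alpha$ under multiplication by $r$ (and it lies in $K_\bullet$, since $\pi_*$ annihilates the leading $v_\ell$). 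Thus left multiplication by $v_\ell$ is an isomorphism of complexes onto $K_\bullet$, and splicing it into the short exact sequence above yields the statement.

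I expect the only real obstacle to be bookkeeping rather than anything substantive: keeping the exterior and symmetric degrees (and the usual $\pm 1$ ambiguities in the hook-Schur-module presentations) straight so that left multiplication by $v_\ell$ is literally well defined on the Schur quotients --- which is exactly what the cited Observation supplies --- together with the separate treatment of the two boundary degrees $i=0$ and $i=m$, where the generic-hook argument does not apply verbatim. The interior computation displayed above is just the standard Koszul-type cancellation, so no real difficulty arises there.
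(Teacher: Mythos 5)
Your argument is correct and is exactly what the paper takes to be implicit in calling this an Observation: the preceding Observation supplies the degreewise split short exact sequences, and the one genuine content is the intertwining identity $\phi(v_\ell\wedge v_J\otimes v^\alpha)=v_\ell\wedge d(v_J\otimes v^\alpha)$ that you compute, which makes $v_\ell\wedge -$ a chain isomorphism from $L_m(V,v_\ell)$ onto the degreewise kernel. The only thing worth flagging is that the indices in Setup \ref{set:exactnessSet} (the last term $\bigwedge^m V'$ rather than $\bigwedge^{m-1}V'\otimes S_0 V$) and in the preceding Observation are off by one in a couple of places, but your degreewise identification matches the correct values, so the argument goes through.
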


The following proposition will be essential for computing the cohomology of $N_m (V)$.

\begin{prop}\label{prop:theHtpy}
Adopt notation and hypotheses as in Setup \ref{set:exactnessSet}. Let $v_I \otimes v^\alpha \in \bigwedge^{i} V' \otimes S_{m-i-1} V \subset L_m (V,v_\ell)$ and assume that $\alpha_\ell + 1$ is not divisible by $p$. Then 
$$v_I \otimes v^{\alpha} = \frac{-1}{(\alpha_\ell +1)} \Big( d h(v_I \otimes v^\alpha ) + h d (v_I \otimes v^\alpha) \Big).$$
\end{prop}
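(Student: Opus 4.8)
The plan is to verify the claimed identity by a direct computation of $dh + hd$ on a basis element $v_I \otimes v^\alpha \in \bigwedge^i V' \otimes S_{m-i-1} V$, tracking carefully how the "$v_\ell$-bearing" and "non-$v_\ell$-bearing" terms interact. Recall the differential $d$ is multiplication by the element $r \in V' \otimes V^*$ corresponding to the projection $V \to V'$, and $h$ is multiplication by $s \in (V')^* \otimes V$ corresponding to the inclusion $V' \hookrightarrow V$. Concretely, $d$ does the comultiply-in-$S$-then-wedge-in-$\bigwedge V'$ operation — but sending a basis vector $v_j$ of $V$ forward only if $j \neq \ell$ (since $r$ kills $v_\ell$) — so $d(v_I \otimes v^\alpha) = \sum_{j \neq \ell} \alpha_j\, v_I \wedge v_j \otimes v^{\alpha - \epsilon_j}$; dually, $h$ does the comultiply-in-$\bigwedge V'$-then-multiply-in-$S V$ operation, contracting out a wedge factor (which is automatically $\neq \ell$) and pushing it into the symmetric side, so $h(v_I \otimes v^\alpha) = \sum_{k} \pm\, v_{I \setminus j_k} \otimes v^{\alpha + \epsilon_{j_k}}$ with the usual Koszul signs.

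First I would write out $dh(v_I \otimes v^\alpha)$ and $hd(v_I \otimes v^\alpha)$ fully. Exactly as in the standard homotopy identity for the Koszul complex, the "cross terms" — where $d$ reinserts an index different from the one $h$ removed (or vice versa) — cancel in pairs after relabeling, leaving only the "diagonal" terms. In $dh$, the diagonal contribution comes from $d$ reinserting the very index $j_k$ that $h$ just removed: this gives $\sum_{k} (\alpha_{j_k}+1)\, v_I \otimes v^\alpha$, i.e. a coefficient $\sum_{j \in I}(\alpha_j + 1)$ (note $j_k \neq \ell$ since $I$ indexes a wedge in $\bigwedge V'$). In $hd$, the diagonal contribution comes from $h$ removing the index $j$ that $d$ just inserted: this gives $\sum_{j \neq \ell} \alpha_j\, v_I \otimes v^\alpha$, i.e. a coefficient $\sum_{j \notin I,\, j \neq \ell} \alpha_j$ (the terms with $j \in I$ would already require $\alpha_j$ support but those are precisely the cross terms I claimed cancel — I should double-check the bookkeeping here, this is the delicate point). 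Adding these, the total coefficient of $v_I \otimes v^\alpha$ is $\sum_{j \in I}(\alpha_j+1) + \sum_{j \notin I,\, j \neq \ell}\alpha_j = |I| + |\alpha| - \alpha_\ell = i + (m-i-1) - \alpha_\ell = m - 1 - \alpha_\ell$.

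Now since $p \mid m$, we have $m \equiv 0$, so this coefficient equals $-1 - \alpha_\ell = -(\alpha_\ell + 1)$ in $k$. The hypothesis that $\alpha_\ell + 1$ is not divisible by $p$ makes this nonzero, so we may divide, yielding exactly $v_I \otimes v^\alpha = \frac{-1}{\alpha_\ell + 1}\big(dh + hd\big)(v_I \otimes v^\alpha)$, as desired.

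The main obstacle I anticipate is the sign and relabeling bookkeeping in establishing that the cross terms cancel and that the diagonal terms have precisely the coefficients claimed — in particular, correctly handling the Koszul signs $\sgn(j_k \in I)$ when a removed index is reinserted, and making sure no term involving $v_\ell$ sneaks in through $d$ (it cannot, since $r$ annihilates $v_\ell$, which is exactly why the hypothesis only constrains $\alpha_\ell$). The underlying mechanism is the familiar "$\partial \circ (\text{contraction}) + (\text{contraction}) \circ \partial = (\text{scalar})$" identity, but here the scalar is a multidegree-dependent quantity rather than a constant, which is the whole point.
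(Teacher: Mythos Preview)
Your proposal is correct and follows essentially the same route as the paper's proof: both compute $dh$ and $hd$ on a basis element, observe that the cross terms cancel by the usual Koszul sign argument, and identify the diagonal coefficient as $\sum_{j\in I}(\alpha_j+1)+\sum_{j\notin I,\,j\neq\ell}\alpha_j = |I|+|\alpha|-\alpha_\ell = m-1-\alpha_\ell \equiv -(\alpha_\ell+1)\pmod p$. Your flagged ``delicate point'' about the cross-term bookkeeping is exactly the step the paper also leaves as a routine verification.
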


\begin{proof}
Let us first compute the composition $d h$:
\begingroup\allowdisplaybreaks
\begin{align*}
    v_I \otimes v^\alpha &\mapsto \sum_{j \in I} \sgn (j) v_{I \backslash j} \otimes v^{\alpha + \epsilon_j} \\
    &\mapsto \sum_{j \in I} \sum_{k \neq \ell ,j}\sgn (j) \alpha_k v_{I \backslash j} \w v_k \otimes v^{\alpha + \epsilon_j - \epsilon_k} \\
    &+ \sum_{j \in I} (\alpha_j+1) e_I \otimes v^\alpha. 
    \end{align*}
    \endgroup
    Next, let us compute the composition $h d$:
    \begingroup\allowdisplaybreaks
    \begin{align*}
    v_I \otimes v^\alpha & \mapsto \sum_{\substack{k \neq \ell , \\
    k \notin I}} \alpha_k v_I \w v_k \otimes v^{\alpha- \epsilon_k} \\
    &\mapsto -\sum_{\substack{k \neq \ell, \\
    k \notin I}} \sum_{j \in I } \sgn(j) \alpha_k v_{I \backslash j} \w v_k \otimes v^{\alpha - \epsilon_k + \epsilon_j} \\
    &+ \sum_{\substack{k \neq \ell , \\
    k \notin I}} \alpha_k v_I \otimes v^\alpha. \\
\end{align*}
\endgroup
Adding both of the above together, we obtain a constant multiple of $v_I \otimes v^\alpha$ with coefficient
\begingroup\allowdisplaybreaks
\begin{align*}
    \sum_{j \in I} (\alpha_j + 1) + \sum_{j \notin I, \ j \neq \ell} \alpha_j &= -\alpha_\ell + |\alpha| + |I| \\
    &= -\alpha_\ell - 1 . 
\end{align*}
\endgroup
This coefficient is nonzero if and only if $\alpha_\ell + 1$ is not divisible by $p$.
\end{proof}

\begin{obs}\label{obs:htpyDescends}
For each $1 \leq \ell \leq \dim V$, the homotopy of Proposition \ref{prop:theHtpy} induces homotopies $h_\ell$ on the complex $N_m (V)$ (defined as $v_\ell \w h$ where $h$ is as in Setup \ref{set:exactnessSet} for the appropriate value of $\ell$).
\end{obs}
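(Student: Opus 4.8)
The plan is to show that the homotopy contracting $L_m(V,v_\ell)$ (away from the $p$-divisible part of the $v_\ell$-exponent) can be transported along the left-multiplication map $v_\ell \wedge -$ to give a partial contracting homotopy on $N_m(V)$. First I would recall from the short exact sequence of complexes in the Observation preceding Setup \ref{set:exactnessSet} that $L_m(V,v_\ell)$ embeds into $N_m(V)$ as the subcomplex $v_\ell \wedge L_m(V,v_\ell)$; concretely, a basis element $v_I \otimes v^\alpha$ of $\bigwedge^i V' \otimes S_{m-i-1}V$ in degree $i$ of $L_m(V,v_\ell)$ maps to $v_\ell \wedge v_I \otimes v^\alpha \in S_{(m-i-1,1^{i+1})}V$. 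The definition $h_\ell := (v_\ell \wedge -)\circ h$ is then literally the composite of $h$ (multiplication by $s \in (V')^*\otimes V$) with this inclusion, so $h_\ell$ is a degree $-1$ map $N_m(V)_i \to N_m(V)_{i-1}$, at least on the image of $L_m(V,v_\ell)$; on a basis element $v_\ell \wedge v_I \otimes v^\alpha$ of $N_m(V)$ (with $\ell \notin I$) it sends it to $v_\ell \wedge \sum_{j\in I}\sgn(j) v_{I\setminus j}\otimes v^{\alpha+\epsilon_j}$, and annihilates basis elements in which $v_\ell$ does not appear in the wedge factor.

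The content of the statement is that this $h_\ell$ is a genuine map of the right degree on all of $N_m(V)$ and that it "comes from" the homotopy of Proposition \ref{prop:theHtpy}. The key steps are: (i) verify $h_\ell$ is well-defined on the Schur-module quotients — but this is immediate since $h$ is defined on $\bigwedge^\bullet V' \otimes S_\bullet V$ before passing to Schur modules and the inclusion $v_\ell \wedge -$ is exactly the map of the split exact sequence in the Observation after Proposition \ref{prop:preserveExponent}, which is well-defined on Schur modules; (ii) observe that $d\circ(v_\ell\wedge-) = -(v_\ell\wedge-)\circ d$ up to the sign convention built into the Koszul-type differential, i.e. that $v_\ell\wedge -$ is a chain map $L_m(V,v_\ell)\to N_m(V)$ — this is exactly the assertion of the short exact sequence of complexes already recorded; (iii) conclude that for any $x$ in the image of $v_\ell\wedge-$ with $v_\ell$-exponent $\alpha_\ell$ satisfying $p\nmid \alpha_\ell+1$, one has $d h_\ell(x) + h_\ell d(x) = -(\alpha_\ell+1)\,x$ by applying $v_\ell\wedge-$ to the identity of Proposition \ref{prop:theHtpy} and using that $v_\ell\wedge-$ commutes (up to sign) with both $d$ and $h$. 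Since $\phi = d$ on $N_m(V)$ and every element of $N_m(V)$ with $v_\ell$ appearing in its wedge part lies in this image, this gives the induced homotopy.

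The main obstacle is purely bookkeeping rather than conceptual: one must check that the signs in "$v_\ell\wedge-$ is a chain map" and in the formula for $h$ interact correctly, and — more subtly — one must be careful about what happens on basis elements of $N_m(V)$ in which $v_\ell$ does \emph{not} appear in the exterior factor, since these are precisely the elements \emph{not} in the image of $v_\ell\wedge-$. Here Proposition \ref{prop:preserveExponent} is what saves the day: a multigraded element of $S_{(m-i,1^i)}V$ can always be represented so that the $v_\ell$-exponent in the symmetric factor is constant across the support, and then either $v_\ell$ appears in every wedge factor (so the element is in the image of $v_\ell\wedge-$) or in none (in which case $h_\ell$ annihilates it and one checks the homotopy formula degenerates appropriately using the split sequence). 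So the plan is to reduce, via Proposition \ref{prop:preserveExponent}, to the two homogeneous cases and invoke Proposition \ref{prop:theHtpy} in the first. I would keep the verification terse, since the displayed computations in Proposition \ref{prop:theHtpy} already contain the only nontrivial arithmetic.
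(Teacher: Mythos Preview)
The paper does not give a proof of this observation; it is stated as self-evident, with the parenthetical ``defined as $v_\ell \wedge h$'' serving as the entire justification. Your proposal correctly unpacks what that definition means and verifies the homotopy identity by transporting Proposition~\ref{prop:theHtpy} along the chain map $v_\ell \wedge -$, which is exactly the intended content. Two minor remarks: first, the sign caution is unnecessary, since $\phi \circ (v_\ell \wedge -) = (v_\ell \wedge -) \circ d$ on the nose (the would-be extra term $\alpha_\ell\, v_\ell \wedge v_I \wedge v_\ell \otimes v^{\alpha-\epsilon_\ell}$ vanishes); second, the paper does not actually need $h_\ell$ to be defined on all of $N_m(V)$, only on the image of $v_\ell \wedge -$, since in the proof of Theorem~\ref{thm:exactEquivariantComplex} one first invokes Proposition~\ref{prop:preserveExponent} to land in that image before applying the homotopy. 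Your extension by zero on the complement is harmless but not required.
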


% \begin{example}
% The complex of Proposition \ref{prop:theyAreComplex} is not always exact. For example, let $p=2$, $m=4$, and $V$ be $2$ dimensional on basis $v_1 , v_2$. Then the element
% $$T = \begin{matrix}
% 1 & 1 & 2 \\
% 2 \\
% \end{matrix}$$
% is a cycle that is not a boundary. To see that this cycle is not a boundary, observe that each element of $S_4 (V)$ has a unique multidegree and hence the only element that could possibly map to $T$ is $v_1^2 v_2^2$. But this element maps to $0$ under $d$, so $T$ is not a boundary.

% Notice that for either $\ell =1$ or $2$, the homotopy $h_\ell$ satisfies $(d h_\ell + h_\ell d)(T) = 0$. 
% \end{example}

The following definition introduces certain vector spaces that will end up appearing as the cohomology of $N_m (V)$.

\begin{definition}\label{def:frobeniusSchurModule}
Let $\lambda$ be a partition. The notation $S^p_\lambda (V)$ will denote the quotient of $S_\lambda (V)$ by the vector subspace generated by all basis elements with multidegree \emph{not} in the algebra $S_\bullet (F^p V)$. 
\end{definition}

\begin{remark}
Notice that Definition \ref{def:frobeniusSchurModule} is well-defined since the straightening relations on the Schur modules $S_\lambda (V)$ preserve multidegree. Notice moreover that if $|\lambda|$ is not divisible by $p$, then $S_\lambda^p (V) = 0$. 
\end{remark}

\begin{example}
Let $V$ be a $3$-dimensional vector space. Then the module $S_{3,1}^2 (V)$ has basis represented by the standard tableaux
$$ \ytableausetup
{boxsize=1.7em}
\begin{ytableau}
1 & 1 & 2\\
2 \\
\end{ytableau}, \quad \ytableausetup
{boxsize=1.7em}
\begin{ytableau}
1 & 1 & 3\\
3 \\
\end{ytableau}, \quad 
\ytableausetup
{boxsize=1.7em}
\begin{ytableau}
2 & 2 & 3\\
3 \\
\end{ytableau}.$$
In particular,
$$\ch (S_{3,1}^2 (V)) = v_1^2 v_2^2 + v_1^2 v_3^2 + v_2^2 v_3^2 = F^2 \ch (\bigwedge^2 V).$$
\end{example}

The observation that $\ch (S_{2,1,1}^2 (V)) =  F^2 \ch (\bigwedge^2 V)$ is not just a coincidence, and is in fact true in general.

\begin{lemma}\label{lem:frobeniusCharacter}
Let $n$ be any integer divisible by some prime $p$. Then there is an isomorphism of vector spaces
$$\eta : S_{n-i,1^i} (V) \to S^p_{pn-i,1^i} (V)$$
with the property that $F^p (\mdeg (e)) = \mdeg (\eta (e))$ for any multigraded element $e \in S_{n-i,1^i} (V)$. 

In particular, there is an equality
$$\ch (S_{(pn-i,1^i)}^p (V)) = F^p \ch (S_{(n-i,1^i)} (V)).$$
\end{lemma}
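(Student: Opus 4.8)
The plan is to exhibit $\eta$ by an explicit formula on spanning monomials and then verify, in turn, that it respects the straightening relations, that it rescales multidegrees by the formal Frobenius $F^p$, and that it is bijective. I would use the presentation of \ref{chunk:SchurDiscussion}: $S_{(n-i,1^i)}(V)$ is the cokernel of the tautological Koszul map
$$\kappa\colon \bigwedge^{i+2}V\otimes S_{n-i-2}V\longrightarrow \bigwedge^{i+1}V\otimes S_{n-i-1}V,\qquad \kappa(v_J\otimes v^\gamma)=\sum_{k\in J}\sgn(k\in J)\,v_{J\setminus k}\otimes v^{\gamma+\epsilon_k},$$
and $S_{(pn-i,1^i)}(V)$ is the analogous cokernel with $n$ replaced by $pn$. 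Writing $\epsilon_I:=\sum_{k\in I}\epsilon_k$, I would define on spanning monomials of $\bigwedge^{i+1}V\otimes S_{n-i-1}V$
$$\eta(v_I\otimes v^\alpha):=v_I\otimes v^{\,p\alpha+(p-1)\epsilon_I};$$
this lands in $\bigwedge^{i+1}V\otimes S_{pn-i-1}V$ since $|p\alpha+(p-1)\epsilon_I|=p(n-i-1)+(p-1)(i+1)=pn-i-1$, and it has multidegree $\prod_{k\in I}v_k^{p(\alpha_k+1)}\prod_{k\notin I}v_k^{p\alpha_k}=F^p(v_I\cdot v^\alpha)=F^p(\mdeg(v_I\otimes v^\alpha))$, so every value of $\eta$ carries a $p$-th power multidegree.

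For well-definedness I would check that $\eta$ carries $\operatorname{im}\kappa$ — the kernel of $\bigwedge^{i+1}V\otimes S_{n-i-1}V\twoheadrightarrow S_{(n-i,1^i)}(V)$ — into the corresponding $\operatorname{im}\kappa$ for $pn$. This reduces to the single identity
$$\eta\big(\kappa(v_J\otimes v^\gamma)\big)=\kappa\big(v_J\otimes v^{\,p\gamma+(p-1)\epsilon_J}\big)\qquad(\text{the right-hand }\kappa\text{ being the }pn\text{-version}),$$
which holds term by term because $p(\gamma+\epsilon_k)+(p-1)\epsilon_{J\setminus k}=\big(p\gamma+(p-1)\epsilon_J\big)+\epsilon_k$. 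Hence $\eta$ descends to $S_{(n-i,1^i)}(V)\to S_{(pn-i,1^i)}(V)$, and post-composing with the surjection onto $S^p_{(pn-i,1^i)}(V)$ (which loses nothing, by the previous paragraph) produces $\eta\colon S_{(n-i,1^i)}(V)\to S^p_{(pn-i,1^i)}(V)$.

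To see this is an isomorphism I would produce the inverse explicitly. The key observation is that a spanning monomial $v_J\otimes v^\beta$ of $S_{(pn-i,1^i)}(V)$ has multidegree in $S_\bullet(F^pV)$ precisely when $\beta_k\equiv-1\pmod p$ for $k\in J$ and $\beta_k\equiv0\pmod p$ for $k\notin J$, equivalently when $\beta=p\gamma+(p-1)\epsilon_J$ for a (then unique) exponent vector $\gamma$. Set $\eta'(v_J\otimes v^\beta):=v_J\otimes v^\gamma$ if $\beta$ has this form and $\eta'(v_J\otimes v^\beta):=0$ otherwise. By construction $\eta'$ kills exactly the spanning monomials that span the subspace quotiented out in forming $S^p_{(pn-i,1^i)}(V)$, so provided it is well-defined on $S_{(pn-i,1^i)}(V)$ — i.e. annihilates $\operatorname{im}\kappa$ — it factors through $S^p_{(pn-i,1^i)}(V)$. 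That well-definedness is the relation computation above run in reverse: a generating relation $\kappa(v_J\otimes v^\gamma)$ either has all its terms in non-$p$-th power multidegree, where $\eta'$ annihilates it term by term, or has $\gamma=p\delta+(p-1)\epsilon_J$, in which case $\eta'$ sends it to $\kappa(v_J\otimes v^\delta)=0$. A termwise check then gives $\eta'\circ\eta=\operatorname{id}$ and $\eta\circ\eta'=\operatorname{id}$, so $\eta$ is the required vector space isomorphism.

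Finally, the multidegree identity passes from spanning monomials to an arbitrary multigraded element $e=\sum_k c_k\,v_{I_k}\otimes v^{\alpha^k}$ (all summands of a common multidegree $\mu$) by linearity, since $\eta(e)$ is then a combination of elements of common multidegree $F^p(\mu)$; summing over a multigraded basis $B$ of $S_{(n-i,1^i)}(V)$ — whose image $\eta(B)$ is a multigraded basis of $S^p_{(pn-i,1^i)}(V)$ because $\eta$ is a multidegree-rescaling isomorphism — yields $\ch(S^p_{(pn-i,1^i)}(V))=\sum_{b\in B}F^p(\mdeg(b))=F^p(\ch(S_{(n-i,1^i)}(V)))$. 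I expect the genuinely substantive step to be the relation-preservation computation (together with its reverse for $\eta'$) — that the recipe of multiplying the symmetric exponents by $p$ and then adding $p-1$ on the wedge-support commutes with the Koszul straightening maps $\kappa$; the remaining ingredients, in particular identifying which spanning monomials of the larger Schur module carry a $p$-th power multidegree, are routine bookkeeping.
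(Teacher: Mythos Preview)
Your proof is correct and follows essentially the same approach as the paper: define $\eta$ by the formula $v_I\otimes v^\alpha\mapsto v_I\otimes v^{p\alpha+(p-1)\epsilon_I}$ and verify compatibility with the tautological Koszul map via the identity $p(\gamma+\epsilon_k)+(p-1)\epsilon_{J\setminus k}=(p\gamma+(p-1)\epsilon_J)+\epsilon_k$. The only differences are cosmetic---the paper works with the image presentation inside $S_{n-i}(V)\otimes\bigwedge^i V$ rather than your cokernel presentation of $\bigwedge^{i+1}V\otimes S_{n-i-1}V$, and simply asserts that $\eta$ is ``evidently invertible'' rather than constructing $\eta'$ explicitly and checking its well-definedness as you do.
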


\begin{remark}
In the above, we will use the convention that $S_{a,1^i} (V) = 0$ for $a \leq 0$.
\end{remark}

\begin{proof}
Use the notation $\Big( S_{n-i} (V) \otimes \bigwedge^i V \Big)^p$ to denote the subspace of all elements of $S_{n-i} (V) \otimes \bigwedge^i V$ having multidegree in $S_\bullet (F^pV)$. The proof will follow first by defining a map
$$\eta' : S_{n-i} (V) \otimes \bigwedge^i V \to \Big( S_{pn - i} (V) \otimes \bigwedge^i V \Big)^p$$
and showing that $\eta'$ restricts to a well-defined map on the appropriate Schur modules. To this end, define
\begingroup\allowdisplaybreaks
\begin{align*}
    \eta' : S_{n-i} (V) \otimes \bigwedge^i V &\to \Big( S_{pn - i} (V) \otimes \bigwedge^i V \Big)^p , \\
    v^\alpha \otimes v_I &\mapsto v^{p \alpha} \cdot v_I^{p-1} \otimes v_I. 
\end{align*}
\endgroup
In the above, the notation $v_I \in S_i (V)$ denotes the product $v_{j_1} \cdots v_{j_i} \in S_i (V)$, where $I = (j_1 < \cdots < j_i)$ and the notation $v_I^{p-1}$ denotes the element $v_{j_1}^{p-1} \cdots v_{j_i}^{p-1} \in S_{(p-1)i} (V)$. Notice that the above is well-defined since the image of any element under $\eta'$ clearly has multidegree in $S_\bullet (F^p V)$, and the degree of that element is $p(n-i) + (p-1)i + i = np $.  

Recall that the Schur module $S_{(n-i,1^i)} (V)$ is equivalently the image of the natural map $S_{n-i-1} (V) \otimes \bigwedge^{i+1} V \to S_{n-i} (V) \otimes \bigwedge^i V$, whence it suffices to show that the following diagram commutes (where the vertical maps are defined as in the discussion \ref{chunk:SchurDiscussion}):
% https://q.uiver.app/?q=WzAsNCxbMCwwLCJTX3tuLWktMX0gKFYpIFxcb3RpbWVzIFxcYmlnd2VkZ2Vee2krMX0gViJdLFsxLDAsIlNfe3BuLWktMX0gKFYpIFxcb3RpbWVzIFxcYmlnd2VkZ2Vee2krMX0gViJdLFswLDEsIlNfe24taX0gKFYpIFxcb3RpbWVzIFxcYmlnd2VkZ2VeaSBWIl0sWzEsMSwiU197cG4taX0gKFYpIFxcb3RpbWVzIFxcYmlnd2VkZ2VeaSBWIl0sWzAsMSwiXFxldGEnIl0sWzIsMywiXFxldGEnIl0sWzAsMl0sWzEsM11d
\[\begin{tikzcd}
	{S_{n-i-1} (V) \otimes \bigwedge^{i+1} V} & {S_{pn-i-1} (V) \otimes \bigwedge^{i+1} V} \\
	{S_{n-i} (V) \otimes \bigwedge^i V} & {S_{pn-i} (V) \otimes \bigwedge^i V}
	\arrow["{\eta'}", from=1-1, to=1-2]
	\arrow["{\eta'}", from=2-1, to=2-2]
	\arrow[from=1-1, to=2-1]
	\arrow[from=1-2, to=2-2]
\end{tikzcd}\]
Moving clockwise around the diagram, one obtains:
\begingroup\allowdisplaybreaks
\begin{align*}
    v^\alpha \otimes v_I &\mapsto v^{p \alpha} \cdot v_I^{p-1} \otimes v_I \\
    &\mapsto \sum_{i \in I} \sgn (i) v^{p \alpha} \cdot v_i^p \cdot v_{I \backslash i}^{p-1} \otimes v_{I \backslash i} \\
    &= \sum_{i \in I} \sgn (i) v^{p(\alpha + \epsilon_i)} \cdot v_{I \backslash i}^{p-1} \otimes v_{I \backslash i}.
\end{align*}
\endgroup
Moving counterclockwise,
\begingroup\allowdisplaybreaks
\begin{align*}
    v^\alpha \otimes v_I &\mapsto \sum_{i \in I} \sgn (i) v^{\alpha + \epsilon_i} \otimes v_{I \backslash i} \\
    &\mapsto \sum_{i \in I} \sgn (i) v^{p(\alpha + \epsilon_i)} \cdot v_{I \backslash i}^{p-1} \otimes v_{I \backslash i}.
\end{align*}
\endgroup
The induced map $\eta$ is evidently invertible, in which case one obtains the desired isomorphism. 
\end{proof}

The following corollary identifies the vector spaces $S_{(a,1^b)}^p (V)$ (non-equivariantly) as Frobenius powers of other Schur modules.

\begin{cor}\label{cor:frobSchurMod}
Let $V$ be a vector space over a field of positive characteristic $p >0$. Then there is a multigraded isomorphism
$$S_{(pm-i,1^i)}^p (V) \cong F^p S_{(m-i,1^i)}.$$
% In particular, for every $i, \ell \geq 1$ there is a GL-equivariant sequence of surjections:
% $$ S_{(p^\ell-i,1^i)} (V) \to \cdots \to F^{p^{\ell-j}} S_{(p^j -i , 1^i)} (V) \to \cdots \to F^{p^{\ell-1}} S_{(p-i,1^i)} (V).$$
\end{cor}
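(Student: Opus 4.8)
The plan is to obtain this as an essentially immediate consequence of Lemma \ref{lem:frobeniusCharacter}; the only genuine content will be reconciling the two descriptions of the underlying multigraded vector space.

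First I would pin down the meaning of the right-hand side. The formal Frobenius power $F^p$ is, a priori, only defined on the symmetric powers $S_\bullet(V)$, where it sends a standard monomial $v^\alpha$ to $v^{p\alpha}$; at the level of multidegrees it implements $v_1^{a_1}\cdots v_n^{a_n}\mapsto v_1^{pa_1}\cdots v_n^{pa_n}$. For a multigraded vector space such as the Schur module $S_{(m-i,1^i)}(V)$, I read $F^p S_{(m-i,1^i)}$ as the multigraded space obtained by fixing a multigraded basis --- say the standard-tableau basis supplied by \ref{chunk:SchurDiscussion} --- and replacing the multidegree $\mdeg(e)$ of each basis vector $e$ by $F^p(\mdeg(e))$. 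Equivalently, and manifestly basis-independently, $F^p S_{(m-i,1^i)}$ is the multigraded space $W$, unique up to multigraded isomorphism, admitting a linear isomorphism $S_{(m-i,1^i)}(V)\xrightarrow{\sim}W$ under which every multigraded element $e$ is sent to a multigraded element of multidegree $F^p(\mdeg(e))$. This is compatible with the character-level identity $\ch(F^p S_{(m-i,1^i)})=F^p\ch(S_{(m-i,1^i)})$ used elsewhere in the paper.

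Next I would invoke Lemma \ref{lem:frobeniusCharacter} with $n$ taken to be $m$: it furnishes a linear isomorphism $\eta\colon S_{(m-i,1^i)}(V)\to S^p_{(pm-i,1^i)}(V)$ with $\mdeg(\eta(e))=F^p(\mdeg(e))$ for every multigraded $e$. Comparing with the previous paragraph, $\eta$ is precisely a linear isomorphism from $S_{(m-i,1^i)}(V)$ onto $S^p_{(pm-i,1^i)}(V)$ carrying each multigraded $e$ to one of multidegree $F^p(\mdeg(e))$, which is exactly the defining property of $F^p S_{(m-i,1^i)}$. Hence $S^p_{(pm-i,1^i)}(V)\cong F^p S_{(m-i,1^i)}$ as multigraded vector spaces, with $\eta$ realizing the isomorphism.

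The hard part here is bookkeeping rather than mathematics: one has to commit to an interpretation of $F^p$ applied to a Schur module --- the formal Frobenius genuinely lives only on symmetric powers --- and then observe that it is basis-independent, which the basis-free characterization above makes automatic. Two smaller points each deserve a line. First, the invertibility of $\eta$ (not merely its injectivity) is what upgrades the statement to an isomorphism; as in the proof of Lemma \ref{lem:frobeniusCharacter}, it comes from the fact that the standard tableaux forming a basis of $S^p_{(pm-i,1^i)}(V)$ in the sense of Definition \ref{def:frobeniusSchurModule} are exactly the standard tableaux of shape $(pm-i,1^i)$ whose multidegree lies in $S_\bullet(F^pV)$, and dividing every exponent of such a multidegree by $p$ gives a bijection onto the standard-tableau basis of $S_{(m-i,1^i)}(V)$. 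Second, if one prefers not to assume $p\mid m$, it suffices to note that the construction of $\eta'$ in the proof of Lemma \ref{lem:frobeniusCharacter} never uses divisibility of $n$ by $p$, so that argument applies verbatim for arbitrary $m$.
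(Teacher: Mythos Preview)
Your proposal is correct and takes essentially the same approach as the paper: both deduce the corollary directly from Lemma~\ref{lem:frobeniusCharacter}, with the paper writing down the inverse map $v^{p\alpha}\cdot v_I^{p-1}\otimes v_I \mapsto F^p(v^\alpha\otimes v_I)$ explicitly while you simply invoke $\eta$ and note its multidegree property is exactly what ``multigraded isomorphism with $F^p S_{(m-i,1^i)}$'' means. Your observation that the divisibility hypothesis on $n$ in Lemma~\ref{lem:frobeniusCharacter} is never used in its proof is also correct and worth noting, since the corollary is stated without that hypothesis on $m$.
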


\begin{proof}
Notice that there is a well-defined map
\begingroup\allowdisplaybreaks
\begin{align*}
    S_{(pm-i,1^i)}^p (V) &\to F^p S_{(m-i,1^i)} (V) \\
\end{align*}
\endgroup
induced by the map $\Big( S_{pm-i} (V) \otimes \bigwedge^i V \Big)^p \to F^p (S_{m-i} (V) \otimes \bigwedge^i V)$ sending an element $v^{p \alpha} \cdot v_I^{p-1} \otimes v_I \mapsto F^p (v^{\alpha} \otimes v_I)$, where $|\alpha| = m-i$ and the notation in the proof of Lemma \ref{lem:frobeniusCharacter} is being used here. This map is multigraded by construction and descends to a well-defined morphism of the corresponding Schur modules by a computation identical to that of Lemma \ref{lem:frobeniusCharacter}.
\end{proof}

Finally, we arrive at the main result of this section. 

\begin{theorem}\label{thm:exactEquivariantComplex}
Adopt notation and hypotheses as in Setup \ref{set:exactnessSet}. Then there is a $\gl (V)$-equivariant isomorphism
$$H^i (N_m (V) ) \cong F^pS_{(m/p-i,1^i)} (V).$$
% In particular, there is a GL-equivariant exact complex of vector spaces
% $$0 \to \frac{S_m (V)}{S_m^p (V)} \to \frac{S_{m-1,1} (V)}{S_{m-1,1}^p (V)} \to \cdots \to \frac{S_{m-i,1^i}(V)}{S_{m-i,1^i}^p (V)} \to \cdots \to \bigwedge^m V \to 0.$$
\end{theorem}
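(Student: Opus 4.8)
The plan is to compute the cohomology of $N_m(V)$ by inducting on $\dim V$, using the short exact sequence of complexes
$$0 \to L_m(V,v_\ell) \xrightarrow{v_\ell \wedge -} N_m(V) \to N_m(V') \to 0$$
from the observation following Setup \ref{set:exactnessSet}, together with the homotopy data of Proposition \ref{prop:theHtpy}. First I would treat the base case: when $\dim V = 1$, $N_m(V)$ is just $0 \to S_m(V) \xrightarrow{\phi} 0$ (all higher exterior powers vanish), and $\phi$ is the zero map, so $H^0 = S_m(V) = kv_1^m$, which is exactly $F^p S_{(m/p)}(V)$ since $m/p \geq 1$ and a one-dimensional space has $S_{(m/p - i, 1^i)}(V) = 0$ for $i \geq 1$; and $H^i = 0$ for $i > 0$. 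The key point for the inductive step is that the homotopies $h_\ell$ of Observation \ref{obs:htpyDescends} show that $L_m(V,v_\ell)$ is ``almost'' contractible: by Proposition \ref{prop:theHtpy}, $dh + hd$ acts as multiplication by $-(\alpha_\ell + 1)$ on the basis element $v_I \otimes v^\alpha$, which is invertible unless $p \mid \alpha_\ell + 1$. So the cohomology of $L_m(V,v_\ell)$ is computed by the subquotient spanned by basis elements $v_I \otimes v^\alpha$ with $\alpha_\ell \equiv -1 \pmod p$ — and one should identify this with (a Frobenius-twisted piece of) the cohomology we are after.

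The main steps, in order: (1) Establish the base case as above. (2) Show that $H^i(L_m(V,v_\ell))$ is identified — via the homotopy — with the subspace of $H^i$-representatives supported on monomials with $v_\ell$-exponent $\equiv -1 \pmod p$; here Proposition \ref{prop:preserveExponent} is the tool that lets one normalize any multigraded cohomology class so that $v_\ell$ appears to a fixed exponent in its whole support, so that the ``$\alpha_\ell + 1$ divisible by $p$'' locus is exactly the part not killed by the contracting homotopy. (3) Feed this into the long exact sequence in cohomology coming from the short exact sequence of complexes, and argue the connecting maps vanish (or that the sequence splits compatibly with the multigrading) so that $H^i(N_m(V))$ is built from $H^i(N_m(V'))$ and $H^i(L_m(V,v_\ell))$. (4) Match the result against the split short exact sequence of the first Observation in this stretch, namely
$$0 \to \bigwedge^{i-1} V' \otimes S_{j-1}(V) \to S_{(j,1^i)}(V) \to S_{(j,1^i)}(V') \to 0,$$
applied with $j = m/p - i$: this exhibits $F^p S_{(m/p - i, 1^i)}(V)$ as an extension of $F^p S_{(m/p-i,1^i)}(V')$ by a Frobenius twist of $\bigwedge^{i-1} V' \otimes S_{m/p - i - 1}(V)$, which is precisely the shape of the two pieces produced in steps (2)–(3). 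Then induction closes the argument. (5) Finally, upgrade the isomorphism to a $\gl(V)$-equivariant one: since the differentials of $N_m(V)$ are $\gl(V)$-equivariant, each $H^i(N_m(V))$ is a $\gl(V)$-module, and by Corollary \ref{cor:frobSchurMod} together with Lemma \ref{lem:frobeniusCharacter} we already know the abstract (ungraded) isomorphism type; I would argue that the multigraded isomorphism constructed through the induction is forced to be $\gl(V)$-equivariant because it respects every weight space and the relevant modules are determined by their characters in the range considered, or alternatively check directly that the maps built from $v_\ell \wedge -$, comultiplication, and the homotopies are all torus-equivariant and patch to a $\gl(V)$-map.

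Alternatively — and this may be cleaner than chasing connecting homomorphisms — one can bypass the long exact sequence and instead argue directly that the total homotopy $\sum_\ell \frac{-1}{?} h_\ell$-type operator exhibits $N_m(V)$ up to homotopy as a direct sum of a contractible complex and the complex computing $F^p S_{(m/p - \bullet, 1^\bullet)}(V)$: concretely, split each term $S_{(m-i,1^i)}(V)$ along the multigrading into the ``Frobenius part'' (multidegrees in $S_\bullet(F^pV)$, which by Corollary \ref{cor:frobSchurMod} is $F^p S_{(m/p-i,1^i)}$ when $p \mid$ everything, and on which $\phi$ must vanish for degree/weight reasons) and the ``non-Frobenius part'', and show the differential restricted to the non-Frobenius part is exact using the homotopies $h_\ell$ applied one variable at a time. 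One has to be a little careful that $\phi$ genuinely kills the Frobenius part: a monomial $v_I \otimes v^{p\beta}\cdot v_I^{p-1}$ maps under $\phi$ to a sum of terms $v^{p\beta}\cdot v_I^{p-1}\cdot v_i \wedge v_{I'}\cdot(\text{lowering one exponent by }1)$ with coefficient the exponent being lowered; every such exponent is $\equiv 0$ or $-1 \equiv p-1 \pmod p$, and when it is $\equiv 0$ the coefficient vanishes in characteristic $p$, while the terms where it is $p-1$ leave the Frobenius subspace — so on the associated graded for the multidegree-filtration $\phi$ preserves the Frobenius part and is zero there.

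The hard part will be step (2)/(3): making precise the claim that the part of $L_m(V,v_\ell)$ surviving the homotopy matches up, under the inclusion $v_\ell \wedge -$ into $N_m(V)$, with a clean Frobenius-twisted summand, and that this gluing is consistent as $\ell$ varies — i.e. that the one-variable-at-a-time deformation retractions can be composed over all $\ell$ without the choices fighting each other. This is where Proposition \ref{prop:preserveExponent} does the real work, and where one must be careful about the order of operations and about the edge cases $a \leq 0$ in the convention $S_{a,1^i}(V) = 0$. Once the combinatorics of "which monomials survive" is pinned down, matching it to $S^p_{(m-i,1^i)}(V) \cong F^p S_{(m/p - i, 1^i)}$ via Corollary \ref{cor:frobSchurMod} is essentially bookkeeping, and the $\gl(V)$-equivariance is then a formal consequence of the equivariance of all the building maps.
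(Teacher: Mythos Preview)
Your ``alternative'' approach is essentially what the paper does, and it is cleaner than your primary inductive plan. The paper does not induct on $\dim V$ or use the long exact sequence at all: it argues directly that (a) any element of $S^p_{(m-i,1^i)}(V)$ lies in $\ker\phi$ and not in $\im\phi$ (both immediate from multidegree considerations, since $\phi$ preserves multidegree), and (b) any multigraded cycle whose multidegree is \emph{not} in $S_\bullet(F^pV)$ is a boundary. For (b) one uses Proposition~\ref{prop:preserveExponent} to represent such a cycle so that some fixed $v_j$ appears with exponent $\alpha_j$ satisfying $p\nmid \alpha_j+1$ throughout the support, and then the single homotopy $h_j$ from Proposition~\ref{prop:theHtpy} finishes. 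There is no need to compose homotopies over all $\ell$ or worry about them ``fighting each other''; one $\ell$ suffices for each fixed multidegree. Your worry about $\phi$ on the Frobenius part is also overcomplicated: $\phi$ preserves multidegree exactly (no filtration needed), and on a basis element $v_I\otimes v^\alpha$ with Frobenius multidegree one has $\alpha_j\equiv 0\pmod p$ for $j\notin I$, so every surviving term of $\phi(v_I\otimes v^\alpha)$ has coefficient zero.

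The genuine gap is step~(5). In positive characteristic, $\gl(V)$-modules are \emph{not} determined by their characters, so ``same weight multiplicities'' does not force an equivariant isomorphism; and torus-equivariance of your building maps is strictly weaker than $\gl(V)$-equivariance. The paper handles this by writing down an explicit map
\[
\nu: F^p S_{(m/p-i,1^i)}(V)\to S_{(m-i,1^i)}(V),\qquad F^p(v^\alpha\otimes v_I)\mapsto v^{p\alpha}v_I^{p-1}\otimes v_I,
\]
and verifying by hand that for an elementary transvection $g$ one has $\nu(g\cdot x)-g\cdot\nu(x)\in\im\phi$. This is a substantial binomial-coefficient computation, and the key identity $\binom{p(b+1)-1}{p(\ell+1)-1}\equiv\binom{b}{\ell}\pmod p$ (an instance of Lucas's theorem) is what makes the Frobenius-weight terms match; the remaining terms have non-Frobenius multidegree and hence lie in $\im\phi$ by part~(b) above. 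Without this explicit check, you have only a torus-equivariant isomorphism, not a $\gl(V)$-equivariant one.
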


\begin{proof}
We will first prove that there is an isomorphism of \emph{vector spaces} $H^i (N_m (V)) \cong S_{m-i,1^i}^p (V)$. Observe that $S_{m-i,1^i}^p (V)$ is a vector subspace of $H^i (N_m (V))$ for each $i \geq 0$. This follows because, a priori, it is clear that any element of $S_{m-i,1^i}^p (V)$ is contained in $\ker \phi$; since $\phi$ preserves multidegree, no such element can lie in the image of $\phi$ since it would then be the image of an element in $S_{m-i+1,1^{i-1}}^p (V)$, which must be $0$.

It remains to show that any multigraded element in the kernel of $\phi$ that does not have multidegree in $S_\bullet (F^p V)$ must lie in the image of $\phi$. By Proposition \ref{prop:preserveExponent}, any such element may be represented by a linear combination of tableaux $v_{I_k} \otimes v^{\alpha^k}$ where, for some fixed $j$, one has $j \in I_k$ and $\alpha_j +1$ is not divisible by $p$. However, Proposition \ref{prop:theHtpy} implies that any such element is in the image of $\phi$; this establishes that $H^i (N_m (V)) \cong S_{m-i,1^i}^p (V)$ (as vector spaces).

It remains to establish the claim of equivariance. Consider the map
\begingroup\allowdisplaybreaks
\begin{align*}
    \nu : F^p S_{(m/p-i,1^i)} (V) &\to S_{(m-i,1^i)} (V), \quad \textrm{induced by} \\
    F^p (v^\alpha \otimes v_I) &\mapsto v^{p \alpha} v_I^{p-1} \otimes v_I.
\end{align*}
\endgroup
It suffices to show that for any $g \in \gl (V)$ and $F^p (v^\alpha \otimes v_I) \in F^p S_{(m/p-i,1^i)} (V)$, one has
$$\nu (g \cdot  F^p (v^\alpha \otimes v_I) ) - g \cdot \nu (F^p (v^\alpha \otimes v_I)) \in \im \phi.$$
Recall that one only needs to prove equivariance for elementary row operations of the form $v_i \mapsto v_i + \lambda v_j$ and $v_\ell \mapsto v_\ell$ for some integers $i \neq j$, and $\ell \neq i$ (the fact that $\nu$ commutes with scalar matrix multiplication is clear). We may assume that the given element of $F^p S_{(m-p-i,1^i)} (V)$ is of the form $F^p(v^\alpha \cdot v_i^b \otimes v_i \cdot v_I)$, where $i \notin I$ and $\alpha_i = 0$. One then computes:
\begingroup\allowdisplaybreaks
\begin{align*}
    \nu (g \cdot  F^p (v^\alpha \cdot v_i^b \otimes v_i \cdot v_I) ) &= \nu \Big( F^p (v^{\alpha} (v_i + \lambda v_j)^b \otimes (v_i + \lambda v_j) \w v_I \Big)  \\
    &= \nu \Big( F^p ( \sum_{\ell+k = b} \lambda^k \binom{b}{\ell} v^\alpha v_i^\ell v_j^k \otimes v_i v_I \\
    &\quad  + \sum_{\ell+k = b} \lambda^{k+1} \binom{b}{\ell} v^\alpha v_i^\ell v_j^k \otimes v_j v_I ) \Big) \\
    &= \sum_{\ell+k = b} \lambda^{pk} \binom{b}{\ell}^p v^{p\alpha} v_I^{p-1} v_i^{p(\ell+1)-1} v_j^{pk} \otimes v_i v_I \\
    &\quad + \sum_{\ell+k = b} \lambda^{p(k+1)} \binom{b}{\ell}^p v^{p\alpha} v_I^{p-1} v_i^{p\ell} v_j^{p(k+1)-1} \otimes v_j v_I , \quad \textrm{and}\\ 
    g \cdot \nu (F^p (v^\alpha \cdot v_i^b \otimes v_i \cdot v_I)) &= g \cdot \Big( v^{p \alpha} v_I^{p-1} v_i^{p(b+1)-1} \otimes v_i v_I \Big) \\
    &= \sum_{\ell+k = p(b+1)-1} \lambda^k \binom{p(b+1)-1}{\ell} v^{p \alpha} v_I^{p-1} v_i^\ell v_j^k \otimes v_i v_I \\
    &\quad + \sum_{\ell+k = p(b+1)-1} \lambda^{k+1} \binom{p(b+1)-1}{\ell} v^{p \alpha} v_I^{p-1} v_i^\ell v_j^k \otimes v_j v_I. \\
\end{align*}
\endgroup
We may rewrite the latter terms to be more recognizable as so:
\begingroup\allowdisplaybreaks
\begin{align*}
    &\sum_{\ell+k = p(b+1)-1} \lambda^k \binom{p(b+1)-1}{\ell} v^{p \alpha} v_I^{p-1} v_i^\ell v_j^k \otimes v_i v_I \\
    =&\sum_{\substack{\ell+k = p(b+1)-1 \\
    p \not\vert \ell+1 \ \textrm{or} \ p \not\vert k}}  \lambda^k \binom{p(b+1)-1}{\ell} v^{p \alpha} v_I^{p-1} v_i^\ell v_j^k \otimes v_i v_I \\
    &+ \sum_{\substack{\ell+k = p(b+1)-1 \\
    p \mid \ell+1 \ \textrm{and} \ p \mid k}}  \lambda^k \binom{p(b+1)-1}{\ell} v^{p \alpha} v_I^{p-1} v_i^\ell v_j^k \otimes v_i v_I \\
    =& \sum_{\substack{\ell+k = p(b+1)-1 \\
    p \not\vert \ell+1 \ \textrm{or} \ p \not\vert k}}  \lambda^k \binom{p(b+1)-1}{\ell} v^{p \alpha} v_I^{p-1} v_i^\ell v_j^k \otimes v_i v_I \\
    &+\sum_{\ell+k = b}  \lambda^{pk} \binom{p(b+1)-1}{p(\ell+1)-1} v^{p \alpha} v_I^{p-1} v_i^{p(\ell+1)-1} v_j^{pk} \otimes v_i v_I, \\
\end{align*}
\endgroup
where the final equality comes from writing $\ell+1 = p \ell'$, $k = p k'$, and then reindexing the summation. A similar summation reindexing yields
\begingroup\allowdisplaybreaks
\begin{align*}
    &\sum_{\ell+k = p(b+1)-1} \lambda^{k+1} \binom{p(b+1)-1}{\ell} v^{p \alpha} v_I^{p-1} v_i^\ell v_j^k \otimes v_j v_I \\
    =& \sum_{\substack{\ell+k = p(b+1)-1 \\
    p \not\vert \ell \ \textrm{or} \ p \not\vert k+1}} \lambda^{k+1} \binom{p(b+1)-1}{\ell} v^{p \alpha} v_I^{p-1} v_i^\ell v_j^k \otimes v_j v_I \\
    &+ \sum_{\ell+k = b} \lambda^{p(k+1)} \binom{p(b+1)-1}{p(k+1)-1} v^{p\alpha} v_I^{p-1} v_i^{p\ell} v_j^{p(k+1)-1} \otimes v_j v_I.
\end{align*}
\endgroup
By Proposition \ref{prop:lukasThmCor}, there is an equality $\binom{p(b+1)-1}{p(\ell+1)-1} = \binom{p(b+1)-1}{p(k+1)-1} = \binom{b}{\ell}$, whence combining all of the above equalities yields
\begingroup\allowdisplaybreaks
\begin{align*}
    &\nu (g \cdot  F^p (v^\alpha \cdot v_i^b \otimes v_i \cdot v_I) ) - g \cdot \nu (F^p (v^\alpha \cdot v_i^b \otimes v_i \cdot v_I)) \\
    =& - \sum_{\substack{\ell+k = p(b+1)-1 \\
    p \not\vert \ell+1 \ \textrm{or} \ p \not\vert k}}  \lambda^k \binom{p(b+1)-1}{\ell} v^{p \alpha} v_I^{p-1} v_i^\ell v_j^k \otimes v_i v_I \\
    &- \sum_{\substack{\ell+k = p(b+1)-1 \\
    p \not\vert \ell \ \textrm{or} \ p \not\vert k+1}} \lambda^{k+1} \binom{p(b+1)-1}{\ell} v^{p \alpha} v_I^{p-1} v_i^\ell v_j^k \otimes v_j v_I.
\end{align*}
\endgroup
The latter term is an element of $\ker \phi$ that is not contained in $S_{(m-i,1^i)}^p (V)$, and hence must lie in the image of $\phi$ by the argument at the beginning of the proof. This yields equivariance.
\end{proof}

The following proposition was used in the proof of Theorem \ref{thm:exactEquivariantComplex}; we state and prove it here for convenience.

\begin{prop}\label{prop:lukasThmCor}
Let $m$ and $n$ be any two integers and $p>0$ any prime integer. Then,
$$\binom{pm + p-1}{pn + p-1} \equiv \binom{m}{n} \mod p.$$
\end{prop}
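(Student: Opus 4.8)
The plan is to reduce the congruence to a coefficient extraction in the polynomial ring $\bbfp[x]$, exploiting the Frobenius identity $(1+x)^p = 1+x^p$ valid in characteristic $p$. We may assume $m,n\geq 0$ (when $n>m$ both sides vanish, which will be consistent with the computation below). First I would write $pm+p-1 = (p-1) + pm$ and expand
\[
(1+x)^{pm+p-1} \;=\; (1+x)^{p-1}\,(1+x)^{pm} \;=\; (1+x)^{p-1}\,\bigl((1+x)^p\bigr)^{m} \;=\; (1+x)^{p-1}\,(1+x^p)^{m}
\]
in $\bbfp[x]$. Expanding the two factors on the right-hand side by the binomial theorem gives
\[
(1+x)^{pm+p-1} \;=\; \Bigl(\sum_{j=0}^{p-1}\binom{p-1}{j}x^{j}\Bigr)\Bigl(\sum_{k\geq 0}\binom{m}{k}x^{pk}\Bigr),
\]
so that the coefficient of $x^{pn+p-1}$ on the right equals $\sum \binom{p-1}{j}\binom{m}{k}$, the sum taken over all pairs $(j,k)$ with $0\leq j\leq p-1$, $k\geq 0$, and $pk+j = pn+p-1$.

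The one point that needs a word of justification is that this last system has a \emph{unique} solution: reducing $pk+j = pn+p-1$ modulo $p$ and using $0\leq j\leq p-1$ forces $j = p-1$, and then $k = n$. Hence the coefficient of $x^{pn+p-1}$ in $(1+x)^{pm+p-1}$, read off from the factored form, is $\binom{p-1}{p-1}\binom{m}{n} = \binom{m}{n}$. On the other hand, reading the same coefficient directly from $(1+x)^{pm+p-1}$ gives $\binom{pm+p-1}{pn+p-1}$ reduced modulo $p$. Comparing the two expressions yields $\binom{pm+p-1}{pn+p-1}\equiv\binom{m}{n}\pmod{p}$, which is the assertion after the substitution $p m + p - 1 = pm+p-1$, $pn+p-1 = pn+p-1$ matching the statement's $\binom{pm+p-1}{pn+p-1}$.

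I expect no substantive obstacle here: once the factorization $(1+x)^{pm+p-1} = (1+x)^{p-1}(1+x^p)^{m}$ is in place, everything is a routine coefficient comparison, with the only (very mild) care being the uniqueness of $(j,k)$ above. As an alternative I would record that the identity is also immediate from Lucas' theorem: writing $m=\sum_i m_i p^i$ in base $p$, the integer $pm+p-1$ has base-$p$ digits equal to $p-1$ in position $0$ and $m_{i-1}$ in position $i\geq 1$, and similarly for $pn+p-1$, so Lucas gives $\binom{pm+p-1}{pn+p-1}\equiv \binom{p-1}{p-1}\prod_i\binom{m_i}{n_i}\equiv\binom{m}{n}\pmod{p}$. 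I would present the self-contained generating-function argument as the main proof and mention the Lucas-theorem derivation as a remark.
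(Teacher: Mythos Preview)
Your proof is correct. The main argument you present---the coefficient extraction from $(1+x)^{pm+p-1}=(1+x)^{p-1}(1+x^p)^m$ in $\bbfp[x]$---is a genuinely different route from the paper's: the paper simply invokes Lucas's theorem directly, writing out the base-$p$ digits of $pm+p-1$ and $pn+p-1$ and reading off the product $\binom{p-1}{p-1}\prod_i\binom{m_i}{n_i}$. Your generating-function argument is self-contained (it is essentially the standard proof of Lucas's theorem specialized to this case) and avoids citing an external result, at the cost of a few more lines; the paper's version is a one-line application of a named theorem. Amusingly, the ``alternative'' you sketch at the end is exactly the paper's proof, so you have in fact recorded both approaches.
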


\begin{proof}
Write $m = \sum_i m_i p^i$ and $n = \sum_i n_i p^i$ in their respective base $p$ expansions. Then there are induced base $p$ expansions $pm + p-1 = \sum_{i > 0} m_{i-1} p^i + p-1$ and $pn + p-1 = \sum_{i>0} n_{i-1} p^i + p-1$. By Lucas's Theorem for binomial coefficients, there is an equality
$$\binom{pm+p-1}{pn+p-1} \equiv \prod_{i>0} \binom{m_{i-1}}{n_{i-1}} \cdot \binom{p-1}{p-1} \equiv \binom{m}{n} \mod p.$$
\end{proof}

\begin{cor}\label{cor:primeBaseCase}
For any prime $p$, there is an exact $\gl (V)$-equivariant complex
$$0 \to F^p V \to S_p (V) \to S_{p-1,1} (V) \to \cdots \to S_{p-i,1^i} (V) \to \cdots \to \bigwedge^p V \to 0.$$
\end{cor}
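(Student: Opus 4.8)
The plan is to read off this statement from the special case $m = p$ of Theorem \ref{thm:exactEquivariantComplex}. Since $p$ is (trivially) divisible by $p$, the complex $N_p(V)$ is defined via Proposition \ref{prop:theyAreComplex}, its differentials $\phi$ are $\gl(V)$-equivariant, and Theorem \ref{thm:exactEquivariantComplex} supplies $\gl(V)$-equivariant isomorphisms $H^i(N_p(V)) \cong F^p S_{(1-i,\,1^i)}(V)$ for every $i$. First I would dispose of the higher cohomology: for $i \geq 1$ the leading part $1-i$ is $\leq 0$, so by the convention $S_{(a,1^i)}(V)=0$ for $a\leq 0$ recorded after Lemma \ref{lem:frobeniusCharacter}, one gets $H^i(N_p(V)) = 0$. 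Thus $N_p(V)$ is exact at every term except the leftmost one $S_p(V)$, where $H^0(N_p(V)) = \ker\!\big(\phi\colon S_p(V)\to S_{(p-1,1)}(V)\big) \cong F^p S_{(1)}(V) = F^p V$.

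Next I would identify this degree-zero cohomology concretely as a subspace of $S_p(V)$. Tracing the isomorphism of Theorem \ref{thm:exactEquivariantComplex} (equivalently the map $\nu$ appearing in its proof, or the map of Corollary \ref{cor:frobSchurMod}) through the case $m/p = 1$, $i = 0$ shows that it carries $F^p(v_j)$ to $v_j^p \in S_p(V)$, so $\ker\phi = \Span_k\{v_1^p,\dots,v_n^p\}$, which is exactly the image of the formal Frobenius power $F^p\colon V \to S_p(V)$. Splicing this inclusion onto the left of $N_p(V)$ produces the claimed sequence
$$0 \to F^p V \xrightarrow{F^p} S_p(V) \xrightarrow{\phi} S_{(p-1,1)}(V) \to \cdots \to S_{(p-i,1^i)}(V) \to \cdots \to \bigwedge^p V \to 0,$$
which is exact since $F^p$ is injective and its image equals $\ker\phi$, and $N_p(V)$ is exact in all positive degrees.

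For equivariance, each $\phi$ is $\gl(V)$-equivariant and $H^0(N_p(V)) \cong F^p V$ is $\gl(V)$-equivariant by Theorem \ref{thm:exactEquivariantComplex}, so the only remaining point is that the augmentation $F^p\colon V \to S_p(V)$ is equivariant once $F^p V$ is given the Frobenius-twisted $\gl(V)$-action (entrywise $p$-th power $g \mapsto g^{[p]}$). This is the ``freshman's dream'' in the commutative ring $S_\bullet(V)$ in characteristic $p$: for $g \in \gl(V)$ one has $g\cdot v_j^p = (g\cdot v_j)^p = \big(\sum_i g_{ij} v_i\big)^p = \sum_i g_{ij}^p\, v_i^p$, so $\Span_k\{v_j^p\}$ is a $\gl(V)$-submodule of $S_p(V)$ isomorphic to the Frobenius twist of $V$, and $F^p$ intertwines the twisted structure with the ambient one. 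The hard part — really the only step that takes any care — is precisely this identification: matching the abstract cohomology computation $H^0(N_p(V))\cong F^p V$ with the literal Frobenius inclusion $v_j\mapsto v_j^p$ while keeping track of the twisted $\gl(V)$-structure; everything else is a direct substitution $m \mapsto p$ into Theorem \ref{thm:exactEquivariantComplex}.
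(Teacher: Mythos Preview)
Your proof is correct and follows essentially the same route as the paper: both specialize Theorem~\ref{thm:exactEquivariantComplex} to $m=p$ and observe that the cohomology vanishes in positive degrees while $H^0$ is the Frobenius twist of $V$. The paper phrases this via the intermediate description $H^i(N_p(V))\cong S^p_{(p-i,1^i)}(V)$ (noting $S^p_p(V)=F^pV$ and $S^p_{(p-i,1^i)}(V)=0$ for $i>0$), whereas you invoke the final equivariant form $F^pS_{(1-i,1^i)}(V)$; your added discussion of why the augmentation $F^p\colon V\to S_p(V)$ is $\gl(V)$-equivariant makes explicit a point the paper leaves implicit.
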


\begin{proof}
Simply observe that $S_p^p (V) = F^p (V)$ and $S_{p-i,1^i}^p (V) = 0$ for any $i >0$. 
\end{proof}

\begin{remark}
A result of Liu and Polo \cite[Proposition 1.4.2]{liu2021cohomology} also constructs a complex similar to that of Corollary \ref{cor:primeBaseCase}, although the differentials are not given explicitly. A dualized version of the complex of Corollary \ref{cor:primeBaseCase} with Weyl modules also appears in works of Carter and Lusztig (see \cite[Discussion 4.3, page 234]{carter1974modular}). It is worth noting that in both of these cases, the authors do not consider the analogous complex for composite integers.
\end{remark}

\begin{remark}
Combining Theorem \ref{thm:exactEquivariantComplex} with Corollary \ref{cor:frobSchurMod}, it follows that the complex $N_m (V)$ satisfies
$$H^i (N_m (V)) = F^p (N_{m/p} (V))_i.$$
As we shall see, this equality is well-suited to inductive arguments for properties of the complexes $N_m (V)$.
\end{remark}

We conclude this section by stating the induced character identity alluded to in the introduction.

\begin{cor}\label{cor:theCharacterIdentity}
For any integer $m \geq 1$, there is an equality:
$$\ch (F^m (V)) = \sum_{i=0}^{m-1} (-1)^i \ch (S_{(m-i,1^i)} (V) ).$$
\end{cor}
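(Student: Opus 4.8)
The plan is to proceed by strong induction on $m$, using the Euler characteristic of the complex $N_m(V)$ together with the cohomology computation of Theorem \ref{thm:exactEquivariantComplex}. The base case is when $m$ is not divisible by any prime, i.e.\ $m=1$: here the identity reads $\ch(F^1(V)) = \ch(S_1(V))$, which is immediate since $S_{(1)}(V) = S_1(V) = V$ and $F^1$ is the identity map. (If one prefers, one can also take as base case any prime $p$, where the statement follows directly from Corollary \ref{cor:primeBaseCase}, since an exact complex has vanishing Euler characteristic and $\ch(F^p(V)) = \ch(S_p^p(V))$, $\ch(S_{p-i,1^i}^p(V)) = 0$ for $i>0$.)

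For the inductive step, fix $m \geq 2$ and suppose $m$ is divisible by a prime $p$; write $m = p\ell$ with $\ell = m/p < m$. The complex $N_m(V)$ is a bounded complex of finite-dimensional vector spaces, so taking characters (which are additive on short exact sequences, hence the character version of the Euler characteristic is computed either from the terms or from the cohomology) gives
\begin{equation*}
\sum_{i=0}^{m} (-1)^i \ch\big(N_m(V)_i\big) = \sum_{i=0}^{m} (-1)^i \ch\big(H^i(N_m(V))\big).
\end{equation*}
On the left, $N_m(V)_0 = S_m(V)$ and $N_m(V)_i = S_{(m-i,1^i)}(V)$ for $1 \leq i \leq m-1$ and $N_m(V)_m = \bigwedge^m V = S_{(1^m)}(V)$, so the left-hand side is exactly $\sum_{i=0}^{m-1}(-1)^i \ch(S_{(m-i,1^i)}(V))$, the right-hand side of the desired identity (the $i=m$ term $\bigwedge^m V$ is the $i = m-1$ hook $S_{(1,1^{m-1})}$ in the intended indexing, so the ranges match). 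On the right, Theorem \ref{thm:exactEquivariantComplex} gives $H^i(N_m(V)) \cong F^p S_{(\ell - i, 1^i)}(V)$, with the convention $S_{(a,1^i)}(V) = 0$ for $a \leq 0$, so the sum runs over $0 \leq i \leq \ell - 1$ (together with the $i = \ell$ term coming from $\bigwedge^\ell$, absorbed as before). Since the $\eta$-isomorphism of Lemma \ref{lem:frobeniusCharacter} (or Corollary \ref{cor:frobSchurMod}) satisfies $F^p(\mdeg(e)) = \mdeg(\eta(e))$ on multigraded elements, we get $\ch(F^p S_{(\ell-i,1^i)}(V)) = F^p \ch(S_{(\ell-i,1^i)}(V))$, where $F^p$ on the right now denotes the ring endomorphism $v_j \mapsto v_j^p$ of $\bbz[v_1,\dots,v_n]$. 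Hence the right-hand side of the Euler characteristic identity equals $F^p\big(\sum_{i=0}^{\ell-1}(-1)^i \ch(S_{(\ell-i,1^i)}(V))\big)$, which by the inductive hypothesis applied to $\ell = m/p$ equals $F^p(\ch(F^\ell(V))) = F^p(v_1^\ell + \cdots + v_n^\ell) = v_1^{p\ell} + \cdots + v_n^{p\ell} = \ch(F^m(V))$. Combining the two sides completes the induction.

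The main thing to be careful about — and what I expect to be the only real obstacle — is bookkeeping of the indexing conventions: the complex $N_m(V)$ is indexed cohomologically from $0$ to $m$ with $N_m(V)_0 = S_m(V)$ and $N_m(V)_m = \bigwedge^m V$, whereas the statement writes the alternating sum as $\sum_{i=0}^{m-1}(-1)^i \ch(S_{(m-i,1^i)}(V))$ using the abuse of notation from Remark \ref{rk:abuseNotation} in which $\bigwedge^m V$ is the $i=m-1$ term $S_{(1,1^{m-1})}$. One must check these line up term by term (they do, with $S_{(m,1^0)} = S_m$ at one end and $S_{(1,1^{m-1})} = \bigwedge^m V$ at the other), and likewise that the vanishing convention $S_{(a,1^i)} = 0$ for $a \leq 0$ correctly truncates the cohomology sum so that the Frobenius twist of the length-$\ell$ identity appears on the nose. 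No genuinely new computation is needed beyond what is already established; the argument is purely a rank (character) count driven by Theorem \ref{thm:exactEquivariantComplex}.
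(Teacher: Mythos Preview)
Your proposal is correct and follows essentially the same approach as the paper: both argue by induction (you on $m$, the paper on the total number of prime factors $|\alpha|$ in $m = p_1^{\alpha_1}\cdots p_\ell^{\alpha_\ell}$, which is equivalent), invoke Theorem \ref{thm:exactEquivariantComplex} to equate the alternating character sum of the terms of $N_m(V)$ with that of its cohomology, and then use Lemma \ref{lem:frobeniusCharacter} plus the inductive hypothesis on $m/p$ to identify the cohomology side as $\ch(F^m(V))$. Your discussion of the indexing bookkeeping is accurate and matches the paper's implicit handling.
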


\begin{remark}
Notice that by definition
$$p_m = \ch (F^m (V)), \quad s_{(m-i,1^i)} = \ch (S_{(m-i,1^i)} (V)),$$
in which case Corollary \ref{cor:theCharacterIdentity} is precisely the equality \ref{eqn:liuPoloId} stated in the introduction.
\end{remark}

\begin{proof}
Write $m = p_1^{\alpha_1} \cdots p_\ell^{\alpha_\ell}$ for some primes $p_r$, $1 \leq r \leq \ell$. The proof is by induction on $|\alpha| = \alpha_1 + \cdots + \alpha_\ell$. If $|\alpha| = 0$ then the statement is trivial, and if $|\alpha| = 1$ then $m$ is prime and the statement follows from Corollary \ref{cor:primeBaseCase}. 

Assume now that $|\alpha| \geq 2$ and let $p$ denote any prime dividing $m$. By Theorem \ref{thm:exactEquivariantComplex}, there is the following equality of characters:
$$\ch (S_m (V)) - \ch (S_m^p (V)) = \sum_{i=1}^{m-1}(-1)^{i+1}  \Big( \ch ( S_{(m-i,1^i)} (V) ) - \ch ( S_{(m-i,1^i)}^p (V)) \Big).$$
Rearranging the above, one obtains the equality
$$\sum_{i=0}^{m-1} (-1)^i \ch (S_{(m-i,1^i)} (V) ) = \sum_{i=0}^{m-1} (-1)^i \ch (S_{(m-i,1^i)}^p (V) ),$$
and by Lemma \ref{lem:frobeniusCharacter}, $\ch (S_{(m-i,1^i)}^p (V)) = F^p \ch (S_{(m/p - i , 1^i)} (V))$. By the inductive hypothesis applied to $m/p$, one obtains
$$\sum_{i=0}^{m-1} (-1)^i \ch (S_{(m-i,1^i)}^p (V) ) = F^p \ch (F^{m-p} (V)) = \ch (F^m V),$$
whence
$$\ch (F^m (V)) = \sum_{i=0}^{m-1} (-1)^i \ch (S_{(m-i,1^i)} (V) ).$$
\end{proof}

\section{Applications to the Algebraic K-Theory of Vector Bundles}\label{sec:Ktheory}

In this section, we consider the globalized version of the complex $N_m (V)$ for vector bundles over some scheme $X$ and some of the consequences this complex and its cohomology have for the K-theory of $X$. We first begin the section by recalling some definitions related to K-theory.

\begin{definition}
Let $R$ be a Noetherian ring or $X$ be a scheme. Then the \emph{Grothendieck groups} $K_0 (R)$ and $K_0 (X)$ are the groups formally generated by equivalence classes of projective modules (respectively vector bundles on $X$) $[P]$ such that $[P] = [P'] + [P'']$ if there exists a short exact sequence
$$0 \to P' \to P \to P'' \to 0.$$
Notice that $K_0 (X)$ admits the structure of a ring by defining $[P]\cdot [P'] := [P \otimes P]$ (where the tensor product is taken over $R$ or the structure sheaf $\cat{O}_X$).
\end{definition}

\begin{definition}
Let $(F,d^F)$ be a complex of finitely generated projective $R$-modules or vector bundles over a scheme $X$.
\begin{enumerate}
    \item The \emph{Euler characteristic} $\chi (F)$ is defined to be
    $$\chi (F) := \sum_i (-1)^i [F_i] \in K_0 (R) \quad (\textrm{or} \ K_0 (X) ).$$
    \item The \emph{secondary Euler characteristic} $\chi' (F)$ is defined to be
    $$\chi' (F) := \sum_i (-1)^i [\im d_i^F] \in K_0 (R) \quad (\textrm{or} \ K_0 (X) ).$$
\end{enumerate}
\end{definition}

\begin{remark}\label{rk:alternativeEulerChar}
Notice that one also has the equality
$$\chi (F) = \sum_i (-1)^i [H_i (F) ].$$
\end{remark}

Throughout the rest of this section, let $X$ denote a scheme and $K_0 (X)$ the Grothendieck group associated to $X$. The remainder of the results will be stated for the algebraic K-theory of vector bundles with the tacit understanding that all of these results hold for $K_0 (R)$, where $R$ is a ring.

\begin{definition}
Let $X$ be a scheme. Then the Adams operations on $K_0 (X)$ are a collection of maps $\{ \psi^k \}_{k \in \bbz}$ satisfying the following:
\begin{enumerate}
    \item For each $k \in \bbz$, the map $\psi^k : K_0 (X) \to K_0 (X)$ is a ring homomorphism.
    \item For every $\ell$, $k \in \bbz$, there is an equality $\psi^k \circ \psi^\ell = \psi^{k \ell}$. 
    \item If $\cat{L}$ is a line bundle, then $\psi^k [\cat{L}] = [\cat{L}^{\otimes k}]$. 
\end{enumerate}
\end{definition}

The fact that Adams operations exist can be deduced from the \emph{splitting principle} (see \cite{may2005note}), but this technique is often not conducive toward explicit computations. Luckily, the following result due to Grayson \cite{grayson1992adams} gives a much more explicit form for Adams operations. In the following statement, the notation $CP$ is shorthand for the mapping cone of the identity $P \to P$, and $S_k$ of a complex is the standard degree $k$ tautological Koszul complex on $P$ (see the discussion of \ref{chunk:SchurDiscussion}).

\begin{prop}[{\cite{grayson1992adams}}]\label{prop:graysonIdentity}
Let $\psi^k$ denote the $k$th Adams operation on $K_0 (X)$. Then,
$$\psi^k [P] = \chi' (S_k CP).$$
In particular, there is the following equality in $K_0 (X)$:
$$\psi^k [P] = \sum_{i=0}^{k-1} (-1)^i [S_{(k-i,1^i)} (P) ].$$
\end{prop}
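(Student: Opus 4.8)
The first equality, $\psi^k[P]=\chi'(S_kCP)$, is exactly Grayson's theorem, so I would simply invoke \cite{grayson1992adams} for it; all the substantive work is in extracting the explicit alternating sum, and that is a purely homological unwinding of $S_kCP$ combined with the description of hook Schur modules in \ref{chunk:SchurDiscussion}.

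The plan is, first, to identify $S_kCP$ termwise. The mapping cone $CP$ is the two-term contractible complex $P\xrightarrow{\ \mathrm{id}\ }P$; applying the graded $k$-th symmetric power and expanding via the Cauchy (``exponential'') decomposition $S_k(A\oplus B)=\bigoplus_{a+b=k}S_a(A)\otimes S_b(B)$ — with the copy of $P$ in odd homological degree contributing $S_b(P[1])\cong(\bigwedge^bP)[b]$ — one finds that the term of $S_kCP$ in homological position $b$ is $\bigwedge^bP\otimes S_{k-b}(P)$ and that the differential is the Leibniz extension of $\mathrm{id}_P$, i.e.\ precisely the tautological Koszul map $\kappa_{k-b,\,b}$ of \ref{chunk:SchurDiscussion}. (This is the identification already alluded to in the statement when it says ``$S_k$ of a complex is the standard degree $k$ tautological Koszul complex on $P$''.) Thus $S_kCP$ is the degree-$k$ strand of the Koszul resolution of $\cat{O}_X$ over $\mathrm{Sym}(P)$; in particular it is exact, each of its terms is a vector bundle, and — since the bottom differential is surjective — a descending induction shows each image $\im\kappa_{k-b,\,b}$ is a direct summand of the next term, hence again a vector bundle defining a class in $K_0(X)$ (equivalently, one may quote that Schur functors of vector bundles are vector bundles).

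Second, I would read off these images: by \ref{chunk:SchurDiscussion} we have $\im\kappa_{a,b}=\ker\kappa_{a+1,\,b-1}=S_{(a+1,1^{b-1})}(P)$, so the image of the $b$-th differential of $S_kCP$ is the hook Schur module $S_{(k-b+1,1^{b-1})}(P)$. Third, substitute into $\chi'(S_kCP)=\sum_b(-1)^b[\im d_b]$ (noting $\im d_0=0$) and reindex by $i=b-1$; the contributions collect, up to the overall sign discussed below, into $\sum_{i=0}^{k-1}(-1)^i[S_{(k-i,1^i)}(P)]$, which together with $\psi^k[P]=\chi'(S_kCP)$ is the claimed identity.

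The one genuinely delicate point is sign bookkeeping: the homological grading placed on $CP$ (hence on $S_kCP$), the sign convention in the definition of $\chi'$, and whether one records images or kernels of the differentials each shift the global sign, so these must be normalized coherently with Grayson's conventions. I would pin this down once and for all using the case $k=1$: there $S_1CP=CP$ is acyclic with $\chi'(S_1CP)=\pm[P]$, and $\psi^1=\mathrm{id}$ forces the value $[P]$, which fixes the normalization for every $k$. Finally, it is worth remarking that — in contrast to the characteristic-$p$ complex $N_m(V)$ of the previous section — this argument is characteristic-free: the tautological Koszul strand is exact and its images are the hook Schur modules over an arbitrary commutative base, so the identity holds over any scheme $X$.
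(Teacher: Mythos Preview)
The paper does not prove this proposition at all: it is stated as a quotation from \cite{grayson1992adams}, and the ``In particular'' clause is implicitly taken as part of Grayson's result. So there is nothing to compare against on the paper's side; you have supplied an argument where the paper has none.

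Your derivation of the explicit alternating sum from $\chi'(S_kCP)$ is correct in substance. The identification of $S_kCP$ with the degree-$k$ tautological Koszul strand is exactly what the paper asserts just before the statement, and the identification $\im\kappa_{a,b}=S_{(a+1,1^{b-1})}(P)$ is precisely \ref{chunk:SchurDiscussion}. The sign issue you flag is real: with the paper's convention $\chi'(F)=\sum_i(-1)^i[\im d_i^F]$ and the Koszul strand indexed so that $F_b=\bigwedge^bP\otimes S_{k-b}(P)$, one literally gets $\sum_{b\geq 1}(-1)^b[S_{(k-b+1,1^{b-1})}(P)]=-\sum_{i=0}^{k-1}(-1)^i[S_{(k-i,1^i)}(P)]$, so your ``calibrate by $k=1$'' remark is not optional but necessary, and it pins the convention to the one Grayson uses. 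One small wording quibble: in your inductive argument that the images are vector bundles, ``direct summand'' is not the right phrase (the short exact sequences need not split globally); what you mean, and what is true, is that the kernel of a surjection of vector bundles is again a vector bundle, which is enough. Your parenthetical alternative (Schur functors preserve vector bundles) is the cleaner route.
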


\begin{remark}
If $X$ is a scheme over a field $k$ of characteristic $p >0$, then $X$ is equipped with the absolute Frobenius map $F : X \to X$. Recall that this map acts as the identity of the space $X$ and is the $p$th power map on the structure sheaf. 

Let $P$ denote a vector bundle on $X$ as above. Then the $p$th Adams operation is precisely the Frobenius pullback $F^*$; that is,
$$\psi^p [P] = [F^* P].$$
\end{remark}

The following is the main result of this section, and gives a complex of vector bundles for which the equality of Proposition \ref{prop:graysonIdentity} may be deduced from an explicit complex of vector bundles.

\begin{theorem}
Let $X$ be a scheme over a field $k$ of characteristic $p > 0$ and $m$ any integer divisible by $p$. Given a vector bundle $\cat{F}$ on $X$, there is a complex of vector bundles:
\begin{equation}\label{eqn:equivComplex}
N_m (\cat{F}) : \quad 0 \to S_{m} (\cat{F}) \to S_{m-1,1} (\cat{F})  \to \cdots \to S_{m-i,1^i}(\cat{F}) \to \cdots \to \bigwedge^{m} \cat{F} \to 0,\end{equation}
satisfying $H_i (N_m (\cat{F}) ) = F^* S_{(m/p - i ,1^i)} (\cat{F})$. In particular, this complex induces the following equality in $K_0 (X)$:
$$\psi^{m} [\cat{F}] = \sum_{i=0}^{m-1} (-1)^i [S_{(m-i,1^i)} (\cat{F}) ].$$
\end{theorem}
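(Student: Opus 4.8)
The plan is to globalize the affine construction of Sections~2 and~3. First I would observe that the map $\phi$ of Definition~\ref{def:thePhiMap} is built entirely from the comultiplication $S_j \to V \otimes S_{j-1}$ and the multiplication $\bigwedge^i V \otimes V \to \bigwedge^{i+1} V$, both of which are natural transformations of functors on free modules; by the remark following Corollary~\ref{cor:primeBaseCase} all the results of Section~3 hold over an arbitrary commutative ring, and hence sheafify. So for a vector bundle $\cat{F}$ on $X$ one gets a sheaf map $\phi: S_{m-i,1^i}(\cat{F}) \to S_{m-i-1,1^{i+1}}(\cat{F})$ locally modeled on Definition~\ref{def:thePhiMap}, and $\phi^2 = 0$ since this is checked locally (Proposition~\ref{prop:theyAreComplex}). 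This yields the complex of vector bundles \eqref{eqn:equivComplex}; the terms are vector bundles because each $S_{m-i,1^i}$ is a polynomial functor carrying free modules to free modules compatibly with localization, so the Schur functor of a locally free sheaf is locally free.

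Next I would compute the cohomology sheaves. The identification $H^i(N_m(\cat{F})) \cong F^* S_{(m/p-i,1^i)}(\cat{F})$ is again local: on an affine open $\spec R \subset X$ on which $\cat{F}$ is free, Theorem~\ref{thm:exactEquivariantComplex} (which holds over $R$, a $k$-algebra, by the Section~3 remark) gives $H^i(N_m(\cat{F})|_{\spec R}) \cong F^p S_{(m/p-i,1^i)}(\cat{F}|_{\spec R})$, and $F^p$ on the level of modules globalizes to the Frobenius pullback $F^*$ on the scheme (the absolute Frobenius acts as $p$th power on $\cato_X$, which is exactly how $F^p$ is defined on basis monomials). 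The only care needed is that these local isomorphisms glue: but the map $\nu$ exhibited in the proof of Theorem~\ref{thm:exactEquivariantComplex}, sending $F^p(v^\alpha \otimes v_I) \mapsto v^{p\alpha} v_I^{p-1} \otimes v_I$, is defined by a natural formula compatible with change of basis up to $\im\phi$ — indeed that compatibility is the content of the equivariance argument — so the induced maps on cohomology are canonical and patch to a global isomorphism $F^* S_{(m/p-i,1^i)}(\cat{F}) \xrightarrow{\sim} H^i(N_m(\cat{F}))$.

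Finally I would extract the K-theory identity. Since \eqref{eqn:equivComplex} is a bounded complex of vector bundles, Remark~\ref{rk:alternativeEulerChar} gives
\[
\sum_{i=0}^m (-1)^i [S_{(m-i,1^i)}(\cat{F})] = \sum_{i} (-1)^i [H^i(N_m(\cat{F}))] = \sum_i (-1)^i [F^* S_{(m/p-i,1^i)}(\cat{F})].
\]
Now $F^*$ is a ring homomorphism on $K_0(X)$ (it is the restriction-of-scalars along Frobenius, exact on vector bundles), so the right-hand side equals $F^*\big(\sum_i (-1)^i [S_{(m/p-i,1^i)}(\cat{F})]\big)$, and by induction on the number of prime factors of $m$ — the base case $m = p$ being Corollary~\ref{cor:primeBaseCase}, where $S_{p-i,1^i}^p = 0$ for $i>0$ forces the complex to be exact except at the top, giving $\sum(-1)^i[S_{(p-i,1^i)}(\cat{F})] = [F^*\cat{F}] = \psi^p[\cat{F}]$ — this inner sum equals $\psi^{m/p}[\cat{F}]$. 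Combined with $\psi^p = F^*$ on $K_0$ of a characteristic-$p$ scheme and $\psi^p \circ \psi^{m/p} = \psi^m$, we conclude $\sum_{i=0}^{m-1}(-1)^i[S_{(m-i,1^i)}(\cat{F})] = \psi^m[\cat{F}]$, matching Proposition~\ref{prop:graysonIdentity}.

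The main obstacle I anticipate is not any single hard computation but rather the bookkeeping needed to argue that the local isomorphisms of Theorem~\ref{thm:exactEquivariantComplex} are genuinely canonical and glue — i.e.\ that the cohomology of $N_m(\cat{F})$ is an honest Frobenius pullback rather than merely abstractly isomorphic to one on each chart. This is exactly where the $\gl$-equivariance established in Theorem~\ref{thm:exactEquivariantComplex} does the work: equivariance under the change-of-basis group on overlaps is what promotes the pointwise vector-space isomorphisms to a morphism of sheaves, so I would emphasize that this is why the equivariant refinement (rather than just the vector-space statement) was needed.
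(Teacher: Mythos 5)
Your proof is correct and follows essentially the same route as the paper: globalize the affine complex and the cohomology identification of Theorem~\ref{thm:exactEquivariantComplex} using the $\gl$-equivariance to ensure the local data glue, then argue by induction on the number of prime factors of $m$ via $\psi^p = F^*$ and $\psi^p \circ \psi^{m/p} = \psi^m$, with Corollary~\ref{cor:primeBaseCase} as base case. One small terminological slip --- $F^*$ is pullback (i.e.\ \emph{extension} of scalars along the Frobenius map $\cato_X \to \cato_X$), not restriction of scalars --- does not affect the argument, since the relevant claim, that $F^*$ is exact on vector bundles and hence descends to a ring endomorphism of $K_0(X)$, is correct.
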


\begin{proof}
The complex of \ref{eqn:equivComplex} is simply a globalized version of the complex of Theorem \ref{thm:exactEquivariantComplex}; the equivariance ensures that globalizing is well-defined. The induced identity on the algebraic K-theory of $X$ follows by induction on $|\alpha|$, where $m = p_1^{\alpha_1} \cdots p_k^{\alpha_k}$ in an identical manner to the proof of Corollary \ref{cor:theCharacterIdentity}. For the base case $|\alpha|=1$, the complex of vector bundles of \ref{eqn:equivComplex} induces the exact sequence
$$0 \to F^* \cat{F} \to S_p (\cat{F}) \to \cdots \to S_{(p-i,1^i)} (\cat{F}) \to \cdots \to \bigwedge^p \cat{F} \to 0,$$
which induces the identity
$$\sum_{i=0}^{p-1} (-1)^i [S_{(p-i,1^i)} (\cat{F}) ] = [F^* \cat{F} ] = \psi^p [\cat{F}].$$
For the inductive step, the complex \ref{eqn:equivComplex} combined with Remark \ref{rk:alternativeEulerChar} induces the following equality in $K_0 (X)$:
$$\sum_{i=0}^{m/p-1} (-1)^i [F^* S_{(m/p-i,1^i)} (\cat{F}) ] = \sum_{i=0}^{m-1} (-1)^i [S_{(p^\ell-i,1^i)} (\cat{F}) ],$$
and proceeding inductively, one finds:
\begingroup\allowdisplaybreaks
\begin{align*}
    \sum_{i=0}^{m/p-1} (-1)^i [F^* S_{(m/p-i,1^i)} (\cat{F}) ] &= \psi^p \Big( \sum_{i=0}^{m/p-1} (-1)^i [S_{(m/p-i,1^i)} (\cat{F}) ] \Big) \\
    &= \psi^p \Big( \psi^{m/p} [\cat{F}] \Big) \\
    &= \psi^{m} [\cat{F}].
\end{align*}
\endgroup
\end{proof}

\bibliographystyle{amsalpha}
\bibliography{biblio}
\addcontentsline{toc}{section}{Bibliography}

\end{document}